\documentclass[12pt,oneside,reqno]{amsart}

\hoffset=-0.7in \textwidth=6.4in \textheight=8.8in

\usepackage{amsmath,amssymb,amsthm,textcomp}
\usepackage{amsfonts,graphicx}
\usepackage[mathscr]{eucal}
\pagestyle{plain}
\usepackage{color}
\usepackage{csquotes}
\usepackage[backend=bibtex,%
firstinits=true,%
doi=false,%
isbn=true,%
url=false,%
maxnames=99]{biblatex}%

\vfuzz=30pt
\AtEveryBibitem{\clearfield{issn}}
\AtEveryCitekey{\clearfield{issn}}
\addbibresource{mybibfile.bib}
\interdisplaylinepenalty=0

\numberwithin{equation}{section}
\DeclareNameAlias{sortname}{last-first}
\theoremstyle{definition}
\usepackage{mathtools}
\addtolength{\topmargin}{-0.7in}
\addtolength{\textheight}{0.4in}

\numberwithin{equation}{section}


\newcommand{\ncom}{\newcommand}

\ncom{\beq}{\begin{equation}}
\ncom{\eeq}{\end{equation}}
\ncom{\bea}{\begin{eqnarray*}}
\ncom{\eea}{\end{eqnarray*}}
\ncom{\beqa}{\begin{eqnarray}}
\ncom{\eeqa}{\end{eqnarray}}
\ncom{\nno}{\nonumber}
\ncom{\non}{\nonumber}
\ncom{\ds}{\displaystyle}
\ncom{\half}{\frac{1}{2}}
\ncom{\mbx}{\makebox{.25cm}}
\ncom{\hs}{\mbox{\hspace{.25cm}}}
\ncom{\rar}{\rightarrow}
\ncom{\Rar}{\Rightarrow}
\ncom{\noin}{\noindent}
\ncom{\bc}{\begin{center}}
\ncom{\ec}{\end{center}}
\ncom{\sz}{\scriptsize}
\ncom{\rf}{\ref}
\ncom{\s}{\sqrt{2}}
\ncom{\sgm}{\sigma}
\ncom{\Sgm}{\Sigma}
\ncom{\psgm}{\sigma^{\prime}}
\ncom{\dt}{\delta}
\ncom{\Dt}{\Delta}
\ncom{\lmd}{\lambda}
\ncom{\Lmd}{\Lambda}
\ncom{\Th}{\Theta}
\ncom{\e}{\eta}
\ncom{\eps}{\epsilon}
\ncom{\pcc}{\stackrel{P}{>}}
\ncom{\lp}{\stackrel{L_{p}}{>}}
\ncom{\dist}{{\rm\,dist}}
\ncom{\sspan}{{\rm\,span}}
\ncom{\re}{{\rm Re\,}}
\ncom{\im}{{\rm Im\,}}
\ncom{\sgn}{{\rm sgn\,}}
\ncom{\ba}{\begin{array}}
\ncom{\ea}{\end{array}}
\ncom{\hone}{\mbox{\hspace{1em}}}
\ncom{\htwo}{\mbox{\hspace{2em}}}
\ncom{\hthree}{\mbox{\hspace{3em}}}
\ncom{\hfour}{\mbox{\hspace{4em}}}
\ncom{\vone}{\vskip 2ex}
\ncom{\vtwo}{\vskip 4ex}
\ncom{\vonee}{\vskip 1.5ex}
\ncom{\vthree}{\vskip 6ex}
\ncom{\vfour}{\vspace*{8ex}}
\ncom{\norm}{\|\;\;\|}
\ncom{\integ}[4]{\int_{#1}^{#2}\,{#3}\,d{#4}}
\ncom{\vspan}[1]{{{\rm\,span}\{ #1 \}}}
\ncom{\dm}[1]{ {\displaystyle{#1} } }
\ncom{\ri}[1]{{#1} \index{#1}}

\newtheorem{theorem}{\bf Theorem}[section]
\newtheorem{remark}{\bf Remark}[section]
\newtheorem{proposition}{Proposition}[section]

\newtheorem{definition}{Definition}[section]
\newtheoremstyle
    {remarkstyle}
    {}
    {11pt}
    {}
    {}
    {\bfseries}
    {:}
    {     }
    {\thmname{#1} \thmnumber{#2} }

\theoremstyle{remarkstyle}



\def\eps{\varepsilon}

\begin{document}
\title{Some Compound Fractional Poisson Processes}
\author[Mostafizar Khandakar]{Mostafizar Khandakar}
\address{Mostafizar Khandakar, Department of Mathematics,
	 Indian Institute of Technology Bombay, Powai, Mumbai 400076, India.}
\email{mostafizar@math.iitb.ac.in}
\author[Kuldeep Kumar Kataria]{Kuldeep Kumar Kataria}
\address{Kuldeep Kumar Kataria, Department of Mathematics, Indian Institute of Technology Bhilai, Raipur 492015, India.}
\email{kuldeepk@iitbhilai.ac.in}
\subjclass[2010]{Primary: 60G22; Secondary: 60G51}
\keywords{Bell-Touchard process;   Poisson-logarithmic process; P\'olya-Aeppli process; compound Poisson process; long-range dependence property.}
\date{June 9, 2022}
\begin{abstract}
In this paper, we introduce and study fractional versions of three compound Poisson processes, namely, the Bell-Touchard process, the Poisson-logarithmic process and the generalized P\'olya-Aeppli process. It is shown that these processes are limiting cases of a recently introduced process by Di Crescenzo {\it et al.} (2016), namely, the generalized counting process. We obtain the mean, variance, covariance, long-range dependence property {\it etc.} for these processes. Also, we obtain several equivalent forms of the one-dimensional distribution of fractional versions of these processes.
\end{abstract}

\maketitle
\section{Introduction}
Di Crescenzo {\it et al.} (2016) introduced and studied a fractional counting process that performs $k$ kinds of jumps of amplitude $1,2,\dots,k$ with positive rates $\lambda_{1}, \lambda_{2},\dots,\lambda_{k}$, respectively. We call it the generalized fractional counting process (GFCP) and denote it by $\{M_{\beta}(t)\}_{t\ge0}$, $0<\beta\le 1$. It is defined as a counting process whose state probabilities $p_{\beta}(n,t)=\mathrm{Pr}\{M_{\beta}(t)=n\}$ satisfy the following system of fractional differential equations:
\begin{equation}\label{cre}
\frac{\mathrm{d}^{\beta}}{\mathrm{d}t^{\beta}}p_{\beta}(n,t)=-(\lambda_{1}+\lambda_{2}+\dots+\lambda_{k})p_{\beta}(n,t)+	\sum_{j=1}^{\min\{n,k\}}\lambda_{j}p_{\beta}(n-j,t),\ n\ge0,
\end{equation}
with initial condition
\begin{equation*}
p_{\beta}(n,0)=\begin{cases}
1,\ n=0,\\
0,\ n\ge1.
\end{cases}
\end{equation*}
Here, $\dfrac{\mathrm{d}^{\beta}}{\mathrm{d}t^{\beta}}$ is the Caputo fractional derivative defined as (see Kilbas {\it et al.} (2006))
\begin{equation}\label{caputo}
\frac{\mathrm{d}^{\beta}}{\mathrm{d}t^{\beta}}f(t):=\left\{
\begin{array}{ll}
\dfrac{1}{\Gamma{(1-\beta)}}\displaystyle\int^t_{0} (t-s)^{-\beta}f'(s)\,\mathrm{d}s,\ 0<\beta<1,\\\\
f'(t),\ \beta=1,
\end{array}
\right.
\end{equation}
and its Laplace transform is given by (see Kilbas {\it et al.} (2006), Eq. (5.3.3))
\begin{equation}\label{lc}
\mathcal{L}\left(\frac{\mathrm{d}^{\beta}}{\mathrm{d}t^{\beta}}f(t);s\right)=s^{\beta}\tilde{f}(s)-s^{\beta-1}f(0),\ s>0.
\end{equation}

For $\beta=1$, the GFCP reduces to the generalized counting process (GCP) $\{M(t)\}_{t\ge0}$ (see Di Crescenzo {\it et al.} (2016)). For $k=1$, the GFCP and the GCP reduces to the time fractional Poisson process (TFPP) (see Beghin and Orsingher (2009)) and the Poisson process, respectively. 
 
Let $\{Y_{\beta}(t)\}_{t\ge0}$ be the inverse stable subordinator, that is, the first passage time of a stable subordinator $\{D_{\beta}(t)\}_{t\ge0}$. A stable subordinator $\{D_{\beta}(t)\}_{t\ge0}$ is a one-dimensional L\'evy process with non-decreasing sample paths. It is known that (see Di Crescenzo {\it et al.} (2016))
\begin{equation*}
M_{\beta}(t)\stackrel{d}{=}M\left(Y_{\beta}(t)\right),\ t\ge0,
\end{equation*}
where $\stackrel{d}{=}$ denotes equality in distribution and $\{M(t)\}_{t\ge0}$ is independent of $\{Y_{\beta}(t)\}_{t\ge0}$. 

Kataria and Khandakar (2022) showed that several recently introduced counting processes such as the Poisson process of order $k$, P\'olya-Aeppli process of order $k$, P\'olya-Aeppli process, negative binomial process {\it etc.} are particular cases of the GCP. In Kataria and Khandakar (2021), the authors studied a limiting case of the GFCP, namely, the convoluted fractional Poisson process.
  
For $j\ge 1$, let $\{N_j(t)\}_{t\ge0}$ be a Poisson process with intensity $\lambda_{j}$. J\'anossy {\it et al.} (1950) considered a composed process $\left\{\sum_{j=1}^{\infty} jN_j(t)\right\}_{t\ge0}$ with $\sum_{j=1}^{\infty}\lambda_{j}<\infty$. The probability mass function (pmf) of $\{\xi(t)\}_{t\ge 0}$ where $\xi(t) = \sum_{j=1}^{\infty} jN_j(t)$ is given by (see J\'anossy {\it et al.} (1950), Eq. (2.18))
\begin{equation}\label{janina}
\mathrm{Pr}\{\xi(t)=n\} = \sum_{\Omega_{n}}\prod_{j=1}^{n}\frac{(t\lambda_j)^{x_j}}{x_j!}e^{-\sum_{j=1}^{\infty}t\lambda_j }.
\end{equation}
Here,
\begin{equation}\label{onn}
\Omega_n= \left\{(x_{1},x_{2},\dots,x_{n}):\sum_{j=1}^{n}jx_{j}=n,\ x_{j}\in \mathbb{N}\cup \{0\}\right\}.
\end{equation}
Its equivalent version is given by (see J\'anossy {\it et al.} (1950), Eq. (2.22))
\begin{equation}\label{manina}
\mathrm{Pr}\{\xi(t)=n\} = \sum_{\Omega^{n}_{k}}\prod_{j=1}^{k}\lambda_{x_j}\frac{t^{k}}{k!}e^{-\sum_{j=1}^{\infty}t\lambda_j },
\end{equation}
where
\begin{equation}\label{okn}
\Omega^{n}_{k}=\left\{(x_{1},x_{2},\dots,x_{k}):\sum_{i=1}^{k}x_{i}=n,\ k\le n,\ x_{i}\ge 1\right\}.
\end{equation}

Let $\{X_{i}\}_{i\ge 1}$ be a sequence of independent and identically distributed (iid) random variables such that $\mathrm{Pr}\{X_{1}=j\}=c_{j}$ for all $j\ge 1$. Consider a compound Poisson process  
\begin{equation*}
Y(t)=\sum_{i=1}^{N(t)}X_{i},\ t\ge0,
\end{equation*}
where $\{N(t)\}_{t\ge0}$ is a Poisson process with intensity $\lambda>0$ that is independent of $\{X_{i}\}_{i\ge 1}$. 

For suitable choice of $\lambda$ and $c_{j}$'s the compound Poisson process $\{Y(t)\}_{t\ge0}$ is equal in distribution to a counting process introduced and studied by Freud and Rodriguez (2022), namely, the Bell-Touchard process (BTP) $\{\mathcal{M}(t)\}_{t\ge0}$. For $\lambda=\alpha(e^{\theta}-1)$ and $c_{j}=\theta^{j}/j!(e^{\theta}-1)$, $j\ge 1$ where $\alpha>0$, $\theta>0$ they have shown that
\begin{equation}\label{ajjagh12}
\mathcal{M}(t)\stackrel{d}{=}Y(t).
\end{equation}

For $\lambda_{j}=\alpha \theta^{j}/j!$, $j\ge 1$, the process  $\{\xi(t)\}_{t\ge 0}$ is equal in distribution to BTP (see Freud and Rodriguez (2022)), that is, $\xi(t)\stackrel{d}{=}\mathcal{M}(t)$. So, the pmf $q(n,t)=\mathrm{Pr}\{\mathcal{M}(t)=n\}$ of BTP is given by
\begin{equation}\label{p(n,t)11}
	q(n,t)=\sum_{\Omega_{n}}\prod_{j=1}^{n}\frac{(\alpha t\theta^{j}/j!)^{x_{j}}}{x_{j}!}e^{-\alpha t\left(e^{\theta}-1\right)},
\end{equation}
where $\Omega_{n}$ is given in \eqref{onn}.
 
For $c_{j}=-(1-p)^{j}/j\ln p$, $j\ge 1$ where $p\in (0,1)$, the compound Poisson process $\{Y(t)\}_{t\ge0}$ reduces to a counting process introduced and studied by Sendova and Minkova (2018), namely, the Poisson-logarithmic process (PLP). We denote it by $\{\hat{\mathcal{M}}(t)\}_{t\ge0}$. It is defined as
\begin{equation}\label{plp1}
\hat{\mathcal{M}}(t)=Y(t).
\end{equation}
As an application, they considered a risk process in which the PLP is used to model the claim numbers.
 
For $c_{j}=\binom{r+j-1}{j}\rho^{j}(1-\rho)^{r}/(1-(1-\rho)^{r})$, $j\ge 1$ where $r>0$, $0<\rho<1$, the compound Poisson process $\{Y(t)\}_{t\ge0}$ reduces to a counting process introduced and studied by Jacob and Jose (2018), namely, the generalized P\'olya-Aeppli process (GPAP). We denote it by $\{\bar{\mathcal{M}}(t)\}_{t\ge0}$. It is defined as
\begin{equation}\label{gpap1}
\bar{\mathcal{M}}(t)=Y(t).
\end{equation}
If $r=1$ then the GPAP reduces to the P\'olya-Aeppli process (see Chukova and Minkova (2013)).

In this paper, we study in detail the fractional versions of BTP, PLP and GPAP. We obtain their L\'evy measures. It is shown that these processes are the limiting cases of GCP. We obtain the probability generating function (pgf), mean, variance, covariance {\it etc.} for their fractional variants and establish their long-range dependence (LRD) property. Several equivalent forms of the pmf of fractional versions of these processes are obtained. We have shown that the fractional variants of these processes are overdispersed and non-renewal. It is observed that the one-dimensional distributions of their fractional variants are not infinitely divisible and their increments has the short-range dependence (SRD) property. It is shown that these fractional processes are equal in distribution to some particular cases of the compound fractional Poisson process studied by Beghin and Macci (2014).

\section{Preliminaries}
In this section, we give some known results and definitions that will be used later.
\subsection{Mittag-Leffler function}
The three-parameter Mittag-Leffler function is defined as (see Kilbas {\it et al.} (2006), p. 45)
\begin{equation*}
E_{\alpha,\beta}^{\gamma}(x)\coloneqq\frac{1}{\Gamma(\gamma)}\sum_{j=0}^{\infty} \frac{\Gamma(j+\gamma)x^{j}}{j!\Gamma(j\alpha+\beta)},\ x\in\mathbb{R},
\end{equation*}
where $\alpha>0$, $\beta>0$ and $\gamma>0$. 

It reduces to the two-parameter Mittag-Leffler function for $\gamma=1$, and for $\gamma=\beta=1$ it reduces to the Mittag-Leffler function. 

For any real $x$, the Laplace transform of the function $t^{\beta-1}E^{\gamma}_{\alpha,\beta}\left(xt^{\alpha}\right)$ is given by  (see Kilbas {\it et al.} (2006), Eq. (1.9.13)):
\begin{equation}\label{mi}
\mathcal{L}\left(t^{\beta-1}E^{\gamma}_{\alpha,\beta}\left(xt^{\alpha}\right);s\right)=\frac{s^{\alpha\gamma-\beta}}{(s^{\alpha}-x)^{\gamma}},\ s>|x|^{1/\alpha}.
\end{equation}

\subsection{Inverse stable subordinator}
The following result holds for the inverse stable subordinator $\{Y_{\beta}(t)\}_{t\ge0}$ (see Leonenko {\it et al.} (2014)):
\begin{equation}\label{ybx}
\mathbb{E}\left(e^{-sY_{\beta}(t)}\right)=E_{\beta,1}\left(-st^{\beta}\right),\ s>0.
\end{equation}	
The mean and variance of $\{Y_{\beta}(t)\}_{t\ge0}$ are given by (see Leonenko {\it et al.} (2014))
\begin{align}\label{meani}
\mathbb{E}\left(Y_{\beta}(t)\right)&=\frac{t^{\beta}}{\Gamma(\beta+1)},\\
\operatorname{ Var}\left(Y_{\beta}(t)\right)&=\left(\frac{2}{\Gamma(2\beta+1)}-\frac{1}{\Gamma^{2}(\beta+1)}\right)t^{2\beta}.\label{xswe331}
\end{align}

For fixed $s$ and large $t$, the following asymptotic result holds (see Maheshwari and Vellaisamy (2016)):
\begin{equation}\label{asi1}
\operatorname{Cov}\left(Y_{\beta}(s),Y_{\beta}(t)\right)\sim \frac{1}{\Gamma^2(\beta+1)}\left( \beta s^{2\beta}B(\beta,\beta+1)-\frac{\beta^{2}s^{\beta+1}}{(\beta+1)t^{1-\beta}}\right),
\end{equation}  
where $B(\beta,\beta+1)$ denotes the beta function.
\subsection{Poisson-logarithmic process} 
The state probabilities $\hat{q}(n,t)=\mathrm{Pr}\{\hat{\mathcal{M}}(t)=n\}$ of PLP satisfies the following (see Sendova and Minkova (2018)):
\begin{equation}\label{btpgov3}
\begin{aligned}
\frac{\mathrm{d}}{\mathrm{d}t}\hat{q}(0,t)&=-\lambda \hat{q}(0,t),\\
\frac{\mathrm{d}}{\mathrm{d}t}\hat{q}(n,t)&=-\lambda \hat{q}(n,t)-\frac{\lambda}{\ln p}\sum_{j=1}^{n}\frac{(1-p)^{j}}{j}\hat{q}(n-j,t),\ n\ge 1,
\end{aligned}
\end{equation}
with initial conditions $\hat{q}(0,0)=1$ and $\hat{q}(n,0)=0$, $n\ge1$. Its pgf $\hat{G}(u,t)=\mathbb{E}\left(u^{\hat{\mathcal{M}}(t)}\right)$ is given by
\begin{equation}\label{pgfplp}
\hat{G}(u,t)=\exp \left(-\lambda t\left(1-\frac{\ln(1-(1-p)u)}{\ln p}\right)\right).
\end{equation}
 
\subsection{Generalized P\'olya-Aeppli process} 
The state probabilities $\bar{q}(n,t)=\mathrm{Pr}\{\bar{\mathcal{M}}(t)=n\}$ of GPAP satisfies the following (see Jacob and Jose (2018)):
\begin{equation}\label{btpgov33}
\begin{aligned}
\frac{\mathrm{d}}{\mathrm{d}t}\bar{q}(0,t)&=-\lambda \bar{q}(0,t),\\
\frac{\mathrm{d}}{\mathrm{d}t}\bar{q}(n,t)&=-\lambda \bar{q}(n,t)+\frac{\lambda}{(1-\rho)^{-r}-1}\sum_{j=1}^{n}\binom{r+j-1}{j}\rho^{j}\bar{q}(n-j,t),\ n\ge 1,
\end{aligned}
\end{equation}
with initial conditions $\bar{q}(0,0)=1$ and $\bar{q}(n,0)=0$, $n\ge1$. Its pgf $\bar{G}(u,t)=\mathbb{E}\left(u^{\bar{\mathcal{M}}(t)}\right)$ is given by 
\begin{equation}\label{pgfplp3}
\bar{G}(u,t)=\exp \left(-\lambda t\left(1-\frac{(1-\rho u)^{-r}-1}{(1-\rho)^{-r}-1}\right)\right).
\end{equation}
Let $\bar{r}_{1}=r\rho \lambda /(1-\rho)(1-(1-\rho)^{r})$ and $\bar{r}_{2}=r(1+r\rho)\rho \lambda/(1-\rho)^{2}(1-(1-\rho)^{r})$. Its mean and variance are given by
\begin{equation}\label{meanvar}
\mathbb{E}\left(\bar{\mathcal{M}}(t)\right)=\bar{r}_{1}t, \ \operatorname{Var}\left(\bar{\mathcal{M}}(t)\right)=\bar{r}_{2}t.
\end{equation}

\subsection{LRD and SRD properties}
The LRD and SRD properties for a non-stationary stochastic process $\{X(t)\}_{t\geq0}$ are defined as follows (see D'Ovidio and Nane (2014), Maheshwari and Vellaisamy (2016)):
\begin{definition}
Let $s>0$ be fixed. The process $\{X(t)\}_{t\ge0}$ is said to exhibit the LRD property if its correlation function has the following asymptotic behaviour:
\begin{equation*}
\operatorname{Corr}(X(s),X(t))\sim c(s)t^{-\theta},\  \text{as}\ t\rightarrow\infty,
\end{equation*}
for some $c(s)>0$ and $\theta\in(0,1)$. If $\theta\in(1,2)$ then it is said to possesses the SRD property.
\end{definition}
\section{Bell-Touchard process and its fractional version}
Here, we introduce a fractional version of a recently introduced counting process, namely, the Bell-Touchard process (BTP) (see Freud and Rodriguez (2022)).
 
First, we study some additional properties of BTP which is defined as follows:
\begin{definition}
A counting process $\{\mathcal{M}(t)\}_{t\ge0}$ is said to be a BTP with parameters $\alpha>0$, $\theta>0$ if\\
(a) $\mathcal{M}(0)=0$;\\	
(b) it has independent and stationary increments;\\	
(c) for $m=0,1,\dots$ and $h>0$ small enough such that $o(h)\to 0$ as $h\to 0$, its state transition probabilities are given by
\begin{equation*}
\mathrm{Pr}\{\mathcal{M}(t+h)=n|\mathcal{M}(t)=m\}=\begin{cases*}	1-\alpha \left(e^{\theta}-1\right)h+o(h),\ n=m,\\
\alpha\theta^{j}h/j!+o(h),\ n=m+j,\ j=1,2,\dots.
\end{cases*}
\end{equation*}
\end{definition}
It follows that the state probabilities $q(n,t)=\mathrm{Pr}\{\mathcal{M}(t)=n\}$, $n\ge0$ of BTP satisfy the following system of differential equations:
\begin{equation}\label{btpgov}
\begin{aligned}
\frac{\mathrm{d}}{\mathrm{d}t}q(0,t)&=-\alpha \left(e^{\theta}-1\right)q(0,t),\\
\frac{\mathrm{d}}{\mathrm{d}t}q(n,t)&=-\alpha \left(e^{\theta}-1\right)q(n,t)+\alpha\sum_{j=1}^{n}\frac{\theta^{j}}{j!}q(n-j,t),\ n\ge 1,
\end{aligned}
\end{equation}
with initial conditions $q(0,0)=1$ and $q(n,0)=0$, $n\ge1$.
\begin{remark}\label{rem1}
On taking $\beta=1$, $\lambda_{j}=\alpha \theta^{j}/j!$ for all $j\ge 1$ and letting $k\to \infty$, the System (\ref{cre}) reduces to the System (\ref{btpgov}). Thus, the BTP is a limiting case of the GCP.
\end{remark}
In view of Remark \ref{rem1}, we note that several results for BTP can be obtained from the corresponding results for GCP. Next, we present a few of them.

The next result gives a recurrence relation for the pmf of BTP. It follows from Proposition 1 of Kataria and Khandakar (2022).
\begin{proposition}
The  state probabilities $q(n,t)$ of BTP satisfy
\begin{equation*}
q(n,t)=\frac{\alpha t}{n}\sum_{j=1}^{n}\frac{\theta^{j}}{(j-1)!}q(n-j,t),\ n\ge1.	
\end{equation*}
\end{proposition}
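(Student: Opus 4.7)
The plan is to reduce the claim to the corresponding identity for the generalized counting process. By Remark \ref{rem1}, the BTP arises as the $k\to\infty$ limit of the GCP with rates $\lambda_j=\alpha\theta^{j}/j!$, so the strategy is to invoke the pmf recurrence for the GCP stated as Proposition 1 of Kataria and Khandakar (2022), which reads $p(n,t)=(t/n)\sum_{j=1}^{\min\{n,k\}} j\lambda_{j}\,p(n-j,t)$ for $n\ge 1$. With the specified $\lambda_{j}$, the factor $j\lambda_{j}$ simplifies to $\alpha\theta^{j}/(j-1)!$, and for $k\ge n$ the truncation $\min\{n,k\}$ is simply $n$, so the right-hand side becomes $k$-independent once $k$ is large enough.

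It then remains only to pass the recurrence through the limit $k\to\infty$. This should be painless: since $\sum_{j\ge 1}\lambda_{j}=\alpha(e^{\theta}-1)<\infty$, the limiting compound process is genuinely well-defined, and the explicit formula \eqref{p(n,t)11} presents $q(n,t)$ as a finite sum over the combinatorial set $\Omega_{n}$ in \eqref{onn}. Both sides of the GCP recurrence involve only finitely many values $p(m,t)$ with $m\le n$, each of which converges pointwise to $q(m,t)$ as $k\to\infty$, so the identity survives in the limit and yields the stated formula.

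If a self-contained derivation is preferred, one can instead work directly with the probability generating function. The compound Poisson representation \eqref{ajjagh12} with $\lambda=\alpha(e^{\theta}-1)$ and $c_{j}=\theta^{j}/j!(e^{\theta}-1)$ gives $G(u,t):=\mathbb{E}(u^{\mathcal{M}(t)})=\exp(\alpha t(e^{\theta u}-e^{\theta}))$. Differentiating in $u$ produces $\partial_{u} G(u,t)=\alpha t\theta e^{\theta u}G(u,t)$, and equating the coefficient of $u^{n-1}$ on the two sides of this identity, using the Cauchy product on the right, immediately delivers $nq(n,t)=\alpha t\sum_{j=1}^{n}\theta^{j}/(j-1)!\,q(n-j,t)$. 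Either way, the only subtlety is bookkeeping — justifying the $k\to\infty$ limit in the first route, or unpacking the Cauchy product in the second — so I do not anticipate a genuine obstacle.
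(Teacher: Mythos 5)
Your first route is precisely the paper's argument: the paper proves this proposition simply by invoking Proposition 1 of Kataria and Khandakar (2022) with $\lambda_{j}=\alpha\theta^{j}/j!$ and letting $k\to\infty$, and your justification of the limit (only finitely many probabilities $q(m,t)$, $m\le n$, appear for fixed $n$, and $\sum_{j\ge1}\lambda_j=\alpha(e^{\theta}-1)<\infty$) is sound; the simplification $j\lambda_j=\alpha\theta^{j}/(j-1)!$ is also correct. Your second, pgf-based derivation from $G(u,t)=\exp\left(\alpha t\left(e^{\theta u}-e^{\theta}\right)\right)$ is a valid self-contained alternative, but it is not needed and is not the paper's route.
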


It is known that the GCP is equal in distribution to the following weighted sum of $k$ independent Poisson processes (see Kataria and Khandakar (2022)):
\begin{equation*}
M(t)\stackrel{d}{=}\sum_{j=1}^{k}jN_{j}(t),
\end{equation*}
where $\{N_{j}(t)\}_{t\ge0}$'s are independent Poisson processes with intensity $\lambda_{j}$'s. On taking $\lambda_{j}=\alpha \theta^{j}/j!$ for all $j\ge 1$ and letting $k\to \infty$, we get
\begin{equation*}
\mathcal{M}(t)\stackrel{d}{=}\sum_{j=1}^{\infty}jN_{j}(t),
\end{equation*}	
which agrees with the result obtained by Freud and Rodriguez (2022). As $\underset{t\to\infty}{\lim}N_{j}(t)/t=\alpha \theta^{j}/j!$, $j\ge1$ a.s., we have 
\begin{align}
\lim_{t\to\infty}\frac{\mathcal{M}(t)}{t}&\stackrel{d}{=}\sum_{j=1}^{\infty}j\lim_{t\to\infty}\frac{N_{j}(t)}{t}\nonumber\\
&=\alpha \theta e^{\theta},\ \text{in probability}.\label{lemmaar}
\end{align}

The next result gives a martingale characterization for the BTP whose proof follows from Proposition 2 of Kataria and Khandakar (2022).
\begin{proposition}
The process $\left\{\mathcal{M}(t)-\alpha \theta e^{\theta}t\right\}_{t\geq0}$ is a martingale with respect to a natural filtration $\mathscr{F}_{t}=\sigma\left(\mathcal{M}(s), s\le t\right)$.
\end{proposition}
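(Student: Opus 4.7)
The plan is to verify the three standard martingale conditions: adaptedness, integrability, and the conditional expectation identity $\mathbb{E}(\mathcal{M}(t)-\alpha\theta e^{\theta}t\mid \mathscr{F}_s)=\mathcal{M}(s)-\alpha\theta e^{\theta}s$ for $0\le s\le t$. Adaptedness is immediate from the definition of $\mathscr{F}_t$, so the real content is integrability plus the conditional expectation step, both of which rest on computing the mean $\mathbb{E}(\mathcal{M}(t))$.

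First I would compute the mean. Using the representation $\mathcal{M}(t)\stackrel{d}{=}\sum_{j=1}^{\infty}jN_j(t)$ with $N_j$ independent Poisson of rate $\alpha\theta^{j}/j!$, monotone convergence gives
\begin{equation*}
\mathbb{E}(\mathcal{M}(t))=\sum_{j=1}^{\infty}j\,\mathbb{E}(N_j(t))=\alpha t\sum_{j=1}^{\infty}\frac{\theta^{j}}{(j-1)!}=\alpha\theta e^{\theta}t.
\end{equation*}
This also confirms integrability for every finite $t$, so the centered process is well defined in $L^1$. (Alternatively, one can extract the mean from $\partial_u G(u,t)\big|_{u=1}$ where $G$ is the pgf implicit in \eqref{btpgov}, or obtain it as a limiting case of the corresponding GCP formula per Remark \ref{rem1}.)

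Next I would exploit the independent and stationary increments property from the definition of the BTP. For $0\le s\le t$, write $\mathcal{M}(t)=\mathcal{M}(s)+(\mathcal{M}(t)-\mathcal{M}(s))$. The first term is $\mathscr{F}_s$-measurable and the second is independent of $\mathscr{F}_s$ with the same distribution as $\mathcal{M}(t-s)$. Therefore
\begin{equation*}
\mathbb{E}(\mathcal{M}(t)\mid\mathscr{F}_s)=\mathcal{M}(s)+\mathbb{E}(\mathcal{M}(t-s))=\mathcal{M}(s)+\alpha\theta e^{\theta}(t-s),
\end{equation*}
and subtracting $\alpha\theta e^{\theta}t$ from both sides yields the martingale identity.

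The proof is essentially routine once the mean is in hand; no step presents a genuine obstacle. The only mild subtlety is justifying the term-by-term expectation in the infinite sum $\sum_{j\ge1}jN_j(t)$, which is handled cleanly by monotone convergence since all summands are nonnegative. Everything else follows from Definition of BTP, items (a)–(c), which already encode the independent and stationary increment structure the martingale argument needs.
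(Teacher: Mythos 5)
Your proof is correct, and it is a self-contained argument rather than the route the paper takes. The paper disposes of this proposition in one line by invoking Proposition 2 of Kataria and Khandakar (2022): since the BTP is the limiting case of the GCP with $\lambda_j=\alpha\theta^j/j!$ and $k\to\infty$ (Remark \ref{rem1}), the martingale characterization of the compensated GCP carries over, with the compensator $\sum_j j\lambda_j t$ becoming $\alpha\theta e^{\theta}t$. You instead work directly from the definition of the BTP: you compute $\mathbb{E}(\mathcal{M}(t))=\alpha\theta e^{\theta}t$ from the representation $\mathcal{M}(t)\stackrel{d}{=}\sum_{j\ge1}jN_j(t)$ (Tonelli/monotone convergence justifies the term-by-term expectation, and the one-dimensional distributional identity is all that is needed for the mean), and then use independent and stationary increments to get $\mathbb{E}(\mathcal{M}(t)\mid\mathscr{F}_s)=\mathcal{M}(s)+\alpha\theta e^{\theta}(t-s)$; the only implicit step is the standard fact that independence of increments over disjoint intervals yields independence of $\mathcal{M}(t)-\mathcal{M}(s)$ from the whole $\sigma$-algebra $\mathscr{F}_s$, which is routine. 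The trade-off: your argument is elementary and makes the mechanism (compensation of a Lévy-type process by its mean) explicit, while the paper's citation-based approach buys uniformity — the same GCP proposition immediately gives the analogous martingale statements for the PLP and GPAP with no additional work.
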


From (\ref{ajjagh12}), it follows that the BTP is a L\'evy process as it is equal in distribution to a compound Poisson process. So, its mean, variance and covariance are given by
\begin{align}
E(\mathcal{M}(t))&=\alpha \theta e^{\theta}t,\label{meanbtp}\\
\operatorname{Var}(\mathcal{M}(t))&=\alpha \theta(\theta+1) e^{\theta}t,\label{varbtp}\\
\operatorname{Cov}(\mathcal{M}(s),\mathcal{M}(t))&=\alpha \theta(\theta+1) e^{\theta}\min\{s,t\},\nonumber
\end{align}
respectively. The BTP exhibits the overdispersion property  as $\operatorname{Var}(\mathcal{M}(t))-E(\mathcal{M}(t))=\alpha \theta^{2} e^{\theta}t>0$ for $t>0$. 

The characteristic function of BTP can be obtained by taking $\lambda_{j}=\alpha \theta^{j}/j!$ for all $j\ge 1$ and letting $k\to \infty$ in Eq. (12) of Kataria and Khandakar (2022). It is given by 
\begin{equation*}
\mathbb{E}\left(e^{\omega\xi \mathcal{M}(t)}\right)=\exp\left(-\alpha t\left(e^{\theta}-e^{\theta e^{\omega\xi}}\right)\right),\ \omega=\sqrt{-1},\ \xi\in\mathbb{R}.
\end{equation*}
So, its L\'evy measure is given by
$
\mu(\mathrm{d}x)=\displaystyle\alpha\sum_{j=1}^{\infty}\frac{\theta^{j}}{j!}\delta_{j}\mathrm{d}x,
$
where $\delta_{j}$'s are Dirac measures.

\begin{remark}
For fixed $s$ and large $t$, the correlation function of BTP has the following
asymptotic behaviour: 
\begin{equation*}
\operatorname{Corr}(\mathcal{M}(s),\mathcal{M}(t))\sim \sqrt{s}t^{-1/2}.
\end{equation*}
Thus, it exhibits the LRD property.
\end{remark}
\subsection{Fractional Bell-Touchard process}
Here, we introduce a fractional version of the BTP, namely, the fractional Bell-Touchard process (FBTP). We define it as the stochastic process $\{\mathcal{M}_{\beta}(t)\}_{t\ge0}$, $0<\beta\le 1$ whose state probabilities  $q_{\beta}(n,t)=\mathrm{Pr}\{\mathcal{M}_{\beta}(t)=n\}$ satisfy the following system of fractional differential equations:
\begin{equation}\label{fbtpgov}
\begin{aligned}
\frac{\mathrm{d}^{\beta}}{\mathrm{d}t^{\beta}}q_{\beta}(0,t)&=-\alpha \left(e^{\theta}-1\right)q_{\beta}(0,t),\\
\frac{\mathrm{d}^{\beta}}{\mathrm{d}t^{\beta}}q_{\beta}(n,t)&=-\alpha \left(e^{\theta}-1\right)q_{\beta}(n,t)+\alpha\sum_{j=1}^{n}\frac{\theta^{j}}{j!}q_{\beta}(n-j,t),\ n\ge 1,
\end{aligned}
\end{equation}
with initial conditions $q_{\beta}(0,0)=1$ and $q_{\beta}(n,0)=0$, $n\ge1$.

Note that the system of equations (\ref{fbtpgov}) is obtained by replacing the integer order derivative in (\ref{btpgov}) by Caputo fractional derivative defined in \eqref{caputo}.
\begin{remark}
On taking $\lambda_{j}=\alpha \theta^{j}/j!$ for all $j\ge 1$ and letting $k\to \infty$, the System (\ref{cre}) reduces to the System (\ref{fbtpgov}). Thus, the FBTP is a limiting case of the GFCP.
\end{remark}

Using \eqref{fbtpgov}, it can be shown that the pgf $G_{\beta}(u,t)=\mathbb{E}\left(u^{\mathcal{M}_{\beta}(t)}\right)$, $|u|\leq1$ of FBTP satisfies
\begin{equation*}
\frac{\mathrm{d}^{\beta}}{\mathrm{d}t^{\beta}}G_{\beta}(u,t)=-\alpha \left(e^{\theta}-e^{\theta u}\right)G_{\beta}(u,t),
\end{equation*}
with $G_{\beta}(u,0)=1$. On taking the Laplace transform in the above equation and using \eqref{lc}, we get
\begin{equation*}
s^{\beta}\tilde{G}_{\beta}(u,s)-s^{\beta-1}G_{\beta}(u,0)=-\alpha \left(e^{\theta}-e^{\theta u}\right)\tilde{G}_{\beta}(u,s),\ s>0,
\end{equation*}
where $\tilde{G}_{\beta}(u,s)$ denotes the Laplace transform of $G_{\beta}(u,t)$. Thus,
\begin{equation*}
\tilde{G}_{\beta}(u,s)=\frac{s^{\beta-1}}{s^{\beta}+\alpha \left(e^{\theta}-e^{\theta u}\right)}.
\end{equation*}
On taking inverse Laplace transform and using (\ref{mi}), we get
\begin{equation}\label{pgfftbp}
G_{\beta}(u,t)=	E_{\beta,1}\left(-\alpha \left(e^{\theta}-e^{\theta u}\right)t^{\beta}\right).
\end{equation}
\begin{remark}\label{rem23}
On taking $\beta=1$ in \eqref{pgfftbp}, we get the pgf $G(u,t)=\exp\left(-\alpha \left(e^{\theta}-e^{\theta u}\right)t\right)$ of BTP. Also, from \eqref{pgfftbp} we can verify that the pmf $q_{\beta}(n,t)$ sums up to one, that is, $\sum_{n=0}^{\infty}q_{\beta}(n,t)=G_{\beta}(u,t)|_{u=1}=E_{\beta,1}(0)=1$.
\end{remark}
The next result gives a time-changed relationship between the BTP and its fractional variant, FBTP.
\begin{theorem}\label{thm4.1}
Let $\{Y_{\beta}(t)\}_{t\ge0}$, $0<\beta<1$, be an inverse stable subordinator independent of the BTP $\{\mathcal{M}(t)\}_{t\ge0}$. Then
\begin{equation}\label{ybt}
\mathcal{M}_{\beta}(t)\stackrel{d}{=}\mathcal{M}(Y_{\beta}(t)),\ t\ge0.
\end{equation}
\end{theorem}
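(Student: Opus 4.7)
The plan is to verify the claimed distributional identity by showing that $\mathcal{M}_{\beta}(t)$ and $\mathcal{M}(Y_{\beta}(t))$ have the same probability generating function. Since both random variables are supported on $\mathbb{N}\cup\{0\}$, agreement of their pgfs on $[0,1]$ (or on any set with an accumulation point) is enough to conclude equality of distributions. The target expression on the FBTP side is already in hand: by \eqref{pgfftbp},
$$G_{\beta}(u,t)=E_{\beta,1}\bigl(-\alpha(e^{\theta}-e^{\theta u})t^{\beta}\bigr).$$

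First, I would use Remark \ref{rem23} to record that the pgf of the ordinary BTP at time $y\ge 0$ is $G(u,y)=\exp\bigl(-\alpha(e^{\theta}-e^{\theta u})y\bigr)$. Then, conditioning on $Y_{\beta}(t)$ and using the assumed independence of $\{\mathcal{M}(t)\}_{t\ge0}$ and $\{Y_{\beta}(t)\}_{t\ge0}$, I would compute
\begin{align*}
\mathbb{E}\bigl(u^{\mathcal{M}(Y_{\beta}(t))}\bigr)
&=\mathbb{E}\bigl[\mathbb{E}\bigl(u^{\mathcal{M}(Y_{\beta}(t))}\mid Y_{\beta}(t)\bigr)\bigr]\\
&=\mathbb{E}\bigl[\exp\bigl(-\alpha(e^{\theta}-e^{\theta u})Y_{\beta}(t)\bigr)\bigr].
\end{align*}

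For $u\in[0,1]$ the quantity $s:=\alpha(e^{\theta}-e^{\theta u})$ is non-negative, so the Laplace transform formula \eqref{ybx} for the inverse stable subordinator applies and yields
$$\mathbb{E}\bigl[\exp\bigl(-\alpha(e^{\theta}-e^{\theta u})Y_{\beta}(t)\bigr)\bigr]=E_{\beta,1}\bigl(-\alpha(e^{\theta}-e^{\theta u})t^{\beta}\bigr),$$
which coincides with $G_{\beta}(u,t)$ from \eqref{pgfftbp}. Since the two pgfs agree on $[0,1]$, the one-dimensional distributions of $\mathcal{M}_{\beta}(t)$ and $\mathcal{M}(Y_{\beta}(t))$ are identical, giving \eqref{ybt}.

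There is no genuine obstacle in this argument; the only point that requires a moment's care is ensuring non-negativity of the Laplace argument so that \eqref{ybx} can be invoked as stated, and this is automatic on $u\in[0,1]$ because $e^{\theta u}\le e^{\theta}$. Alternatively, one could verify the identity by checking that the mixture $n\mapsto\int_{0}^{\infty}q(n,y)h_{\beta}(y,t)\,\mathrm{d}y$, where $h_{\beta}(\cdot,t)$ is the density of $Y_{\beta}(t)$, satisfies the governing system \eqref{fbtpgov}, using the fact that $h_{\beta}$ solves a fractional relaxation equation; but this route is longer than the pgf calculation above.
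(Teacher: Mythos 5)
Your proposal is correct and follows essentially the same route as the paper: the paper also computes $\mathbb{E}\bigl(u^{\mathcal{M}(Y_{\beta}(t))}\bigr)$ by integrating the BTP pgf $G(u,x)=\exp\bigl(-\alpha(e^{\theta}-e^{\theta u})x\bigr)$ against the density of $Y_{\beta}(t)$, applies \eqref{ybx}, and matches the result with \eqref{pgfftbp}. Your extra remark on the non-negativity of the Laplace argument for $u\in[0,1]$ is a fine (implicit in the paper) point of care, but the argument is the same.
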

\begin{proof}
Let $h_{\beta}(x,t)$ be the density of $\{Y_{\beta}(t)\}_{t\ge0}$.	Then, 
\begin{align*}
\mathbb{E}\left(u^{\mathcal{M}(Y_{\beta}(t))}\right)&=\int_{0}^{\infty}G(u,x)h_{\beta}(x,t)\mathrm{d}x\\
&=\int_{0}^{\infty}	\exp\left(-\alpha \left(e^{\theta}-e^{\theta u}\right)x\right)h_{\beta}(x,t)\mathrm{d}x,\ \ (\text{using}\ \text{Remark}\ \ref{rem23})\\
&=E_{\beta,1}\left(-\alpha \left(e^{\theta}-e^{\theta u}\right)t^{\beta}\right),\ (\text{using} \ \eqref{ybx}),
\end{align*}
which agrees with \eqref{pgfftbp}. This completes the proof.
\end{proof}
\begin{remark}\label{remark3.5}
Let $\{T_{2\beta}(t)\}_{t>0}$ be a random process whose distribution is given by the folded solution of the following Cauchy problem (see Orsingher and Beghin (2004)): 
\begin{equation}\label{diff}
\frac{\mathrm{d}^{2\beta}}{\mathrm{d}t^{2\beta}}u(x,t)=\frac{\partial^{2}}{\partial x^{2}}u(x,t),\ x\in\mathbb{R},\ t>0,
\end{equation}
with $u(x,0)=\delta(x)$ for $0<\beta\le 1$ and $\frac{\partial^{}}{\partial t}u(x,0)=0$ for $1/2<\beta\le1$. It is known that the density functions of $Y_{\beta}(t)$ and $T_{2\beta}(t)$ coincides (see Meerschaert {\it et al.} (2011)). Hence,
\begin{equation}\label{keyyekk1}
\mathcal{M}_{\beta}(t)\overset{d}{=}\mathcal{M}(T_{2\beta}(t)),\ t>0,
\end{equation}	
where $\{T_{2\beta}(t)\}_{t>0}$ is independent of the BTP.
	
The random process $\{T_{2\beta}(t)\}_{t>0}$ becomes a reflecting Brownian motion $\{|B(t)|\}_{t>0}$ for $\beta=1/2$ as the equation (\ref{diff}) reduces to the following heat equation:
\begin{equation*}
\begin{cases*}
\frac{\partial}{\partial t}u(x,t)=\frac{\partial^{2}}{\partial x^{2}}u(x,t),\ x\in\mathbb{R},\ t>0,\\
u(x,0)=\delta(x).
\end{cases*}
\end{equation*}
So, $\mathcal{M}_{1/2}(t)$ is equal in distribution to BTP at a Brownian time, that is, $\mathcal{M}(|B(t)|)$, $t>0$.
\end{remark}
In view of (\ref{ajjagh12})	and (\ref{ybt}), it follows that the FBTP is equal in distribution to the following compound fractional Poisson process:
\begin{equation}\label{compoundr1}
\mathcal{M}_{\beta}(t)\stackrel{d}{=}\sum_{i=1}^{N(Y_{\beta}(t))}X_{i}\stackrel{d}{=}\sum_{i=1}^{N_{\beta}(t)}X_{i},\ t\ge0,
\end{equation}
where $\{N_{\beta}(t)\}_{t\ge0}$ is a TFPP with intensity $\alpha\left(e^{\theta}-1\right)$ independent of the sequence of iid random variables $\{X_{i}\}_{i\ge1}$. Thus, it is neither Markovian nor a L\'evy process (see Scalas (2012)). 

\begin{remark}
The system of differential equations that governs the state probabilities of compound fractional Poisson process was obtained by Beghin and Macci (2014). In view of \eqref{compoundr1}, the System \eqref{fbtpgov} can alternatively be obtained using Proposition 1 of Beghin and Macci (2014).
\end{remark}	

Next, we obtain the pmf of FBTP and some of its equivalent versions. 

The solution $u_{2\beta}(x,t)$ of \eqref{diff} is given by (see Beghin and Orsingher (2009)):
\begin{equation}\label{u2b}
u_{2\beta}(x,t)=\frac{1}{2t^{\beta}}W_{-\beta, 1-\beta}\left(-\frac{|x|}{t^{\beta}}\right),\ t>0, \ x\in \mathbb{R},
\end{equation}
where $W_{\nu,\gamma}(\cdot)$ is the Wright function defined as follows:
\begin{equation*}
W_{\nu,\gamma}(x)=\sum_{k=0}^{\infty}\frac{x^{k}}{k!\Gamma(k\nu+\gamma)},\ \nu>-1,\ \gamma >0,\ x\in \mathbb{R}.
\end{equation*}
Let 
\begin{equation}\label{ub}
\bar{u}_{2\beta}(x,t)=\begin{cases*}
2u_{2\beta}(x,t),\ x>0, \\
0,\ x<0,
\end{cases*}
\end{equation}
be the folded solution to \eqref{diff}.
\begin{theorem}\label{thm4.21}
The pmf $q_{\beta}(n,t)=\mathrm{Pr}\{\mathcal{M}_{\beta}(t)=n\}$ of FBTP is given by
\begin{equation}\label{pmf1}
q_{\beta}(n,t)=\begin{cases*}	E_{\beta, 1}\left(-\alpha (e^{\theta}-1) t^{\beta}\right),\ n=0,\\
\displaystyle\sum_{\Omega_{n}}\prod_{j=1}^{n}\frac{\left(\alpha \theta^{j}/j!\right)^{x_{j}}}{x_{j}!}z_{n}!t^{z_{n}\beta }E_{\beta, z_{n}\beta +1}^{z_{n}+1}\left(-\alpha (e^{\theta}-1)t^{\beta}\right),\ n\ge1,
\end{cases*}
\end{equation}
where  $z_{n}=x_{1}+x_{2}+\dots+x_{n}$ and $\Omega_{n}$ is given in \eqref{onn}.
\end{theorem}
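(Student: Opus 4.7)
The plan is to compute $q_{\beta}(n,t)$ by conditioning on the inverse stable subordinator using the time-change identity \eqref{ybt} established in Theorem \ref{thm4.1}. Writing $h_{\beta}(x,t)$ for the density of $Y_{\beta}(t)$, we have
\begin{equation*}
q_{\beta}(n,t)=\int_{0}^{\infty}q(n,x)\,h_{\beta}(x,t)\,\mathrm{d}x,
\end{equation*}
where $q(n,x)$ is the BTP pmf given in \eqref{p(n,t)11}. For $n=0$, the sum over $\Omega_{0}$ collapses to the empty product, so the integrand is just $e^{-\alpha(e^{\theta}-1)x}h_{\beta}(x,t)$ and the integral equals $\mathbb{E}(e^{-\alpha(e^{\theta}-1)Y_{\beta}(t)})=E_{\beta,1}(-\alpha(e^{\theta}-1)t^{\beta})$ by \eqref{ybx}, giving the first branch of \eqref{pmf1}.

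For $n\ge 1$, I would interchange the (finite) sum over $\Omega_{n}$ with the integral, pulling the $x$-independent factors out, which yields
\begin{equation*}
q_{\beta}(n,t)=\sum_{\Omega_{n}}\prod_{j=1}^{n}\frac{(\alpha\theta^{j}/j!)^{x_{j}}}{x_{j}!}\int_{0}^{\infty}x^{z_{n}}e^{-\alpha(e^{\theta}-1)x}h_{\beta}(x,t)\,\mathrm{d}x,
\end{equation*}
with $z_{n}=x_{1}+\dots+x_{n}$ coming from the total power of $x$ in $\prod_{j}(\alpha x\theta^{j}/j!)^{x_{j}}$. The remaining integral is $(-1)^{z_{n}}\partial_{s}^{z_{n}}\mathbb{E}(e^{-sY_{\beta}(t)})$ evaluated at $s=\alpha(e^{\theta}-1)$.

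To finish, I would differentiate $E_{\beta,1}(-st^{\beta})=\sum_{k\ge 0}(-st^{\beta})^{k}/\Gamma(k\beta+1)$ term-by-term $z_{n}$ times and reindex by $k=z_{n}+m$, giving
\begin{equation*}
(-1)^{z_{n}}\frac{\mathrm{d}^{z_{n}}}{\mathrm{d}s^{z_{n}}}E_{\beta,1}(-st^{\beta})=t^{z_{n}\beta}\sum_{m=0}^{\infty}\frac{(m+z_{n})!\,(-st^{\beta})^{m}}{m!\,\Gamma((m+z_{n})\beta+1)}.
\end{equation*}
Comparing this series with the definition of $E_{\beta,z_{n}\beta+1}^{z_{n}+1}$, the factor $(m+z_{n})!=\Gamma(m+z_{n}+1)$ matches the three-parameter Mittag-Leffler numerator, leaving an overall $z_{n}!$ to reconcile the $1/\Gamma(z_{n}+1)$ prefactor. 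Substituting $s=\alpha(e^{\theta}-1)$ then recognizes the integral as $z_{n}!\,t^{z_{n}\beta}E_{\beta,z_{n}\beta+1}^{z_{n}+1}(-\alpha(e^{\theta}-1)t^{\beta})$, and plugging back yields the stated formula.

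The routine steps are Fubini and the series manipulation; the only real point of care is the correct identification of the differentiated Mittag-Leffler series with $E^{z_{n}+1}_{\beta,z_{n}\beta+1}$, which is where the indices $z_{n}+1$ and $z_{n}\beta+1$ and the prefactor $z_{n}!$ all have to line up simultaneously. An alternative route, available as a sanity check, is to verify \eqref{pmf1} by summing $\sum_{n}q_{\beta}(n,t)u^{n}$ and confirming that it reproduces the pgf \eqref{pgfftbp}; this would also handle any convergence issue in the interchange of summation and integration above.
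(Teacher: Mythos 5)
Your argument is correct, and its overall skeleton is the same as the paper's: both write $q_{\beta}(n,t)$ as the BTP pmf \eqref{p(n,t)11} integrated against the density of the random time furnished by the time change \eqref{ybt}, pull the finite sum over $\Omega_{n}$ outside, and reduce everything to one integral producing $z_{n}!\,t^{z_{n}\beta}E_{\beta,z_{n}\beta+1}^{z_{n}+1}\left(-\alpha(e^{\theta}-1)t^{\beta}\right)$. Where you differ is in how that integral is evaluated. The paper works with $T_{2\beta}(t)$ (equivalently, via Remark \ref{remark3.5}, the same density as $Y_{\beta}(t)$) written through the folded Wright-function solution \eqref{u2b}--\eqref{ub}, changes variables, and then quotes the integral identity of Beghin and Orsingher (2010), Eq.\ (2.13), which hands over the three-parameter Mittag--Leffler function directly. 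You instead recognize $\int_{0}^{\infty}x^{z_{n}}e^{-sx}h_{\beta}(x,t)\,\mathrm{d}x$ as $(-1)^{z_{n}}\partial_{s}^{z_{n}}E_{\beta,1}(-st^{\beta})$ via \eqref{ybx}, differentiate the series term by term, and reindex; your identity $(-1)^{z_{n}}\partial_{s}^{z_{n}}E_{\beta,1}(-st^{\beta})=z_{n}!\,t^{z_{n}\beta}E_{\beta,z_{n}\beta+1}^{z_{n}+1}(-st^{\beta})$ checks out against the definition of $E_{\alpha,\beta}^{\gamma}$, and the $n=0$ case via \eqref{ybx} matches the first branch. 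What your route buys is self-containedness: no Wright-function representation or external integral formula is needed, only the Laplace transform \eqref{ybx} and a series manipulation, at the modest cost of justifying differentiation under the integral sign (routine here, since $E_{\beta,1}(-st^{\beta})$ is entire in $s$ and the moments of $Y_{\beta}(t)$ are finite). The paper's route avoids that justification by invoking a ready-made identity but depends on the cited literature and on the equality of the densities of $Y_{\beta}(t)$ and $T_{2\beta}(t)$.
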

\begin{proof}
From \eqref{keyyekk1} and \eqref{ub}, we have
\begin{equation*}
q_{\beta}(n,t)=\int_{0}^{\infty}q(n,x)\bar{u}_{2\beta}(x,t)\mathrm{d}x.	
\end{equation*}
We use \eqref{p(n,t)11} and \eqref{u2b} in the above equation to obtain
\begin{align*}
q_{\beta}(n,t)&=\sum_{\Omega_{n}}\prod_{j=1}^{n}\frac{\left(\alpha \theta^{j}/j!\right)^{x_{j}}}{x_{j}!}t^{-\beta}\int_{0}^{\infty}e^{-\alpha x(e^{\theta}-1)}x^{z_{n}}W_{-\beta, 1-\beta}\left(-\frac{x}{t^{\beta}}\right)\mathrm{d}x \\
&=\sum_{\Omega_{n}}\prod_{j=1}^{n}\frac{\left(\alpha \theta^{j}/j!\right)^{x_{j}}}{x_{j}!}\frac{t^{-\beta}}{\left(\alpha (e^{\theta}-1)\right)^{z_{n}+1}}\int_{0}^{\infty}e^{-y}y^{z_{n}}W_{-\beta, 1-\beta}\left(-\frac{y}{\alpha (e^{\theta}-1)t^{\beta}}\right)\mathrm{d}y.
\end{align*}
On using the following result (see Beghin and Orsingher (2010), Eq. (2.13)):
\begin{equation*}
E_{\beta, z_{n}\beta +1}^{z_{n}+1}\left(-\alpha (e^{\theta}-1)t^{\beta}\right)=\frac{t^{-\beta(z_{n}+1)}}{z_{n}!\left(\alpha (e^{\theta}-1)\right)^{z_{n}+1}}\int_{0}^{\infty}e^{-y}y^{z_{n}}W_{-\beta, 1-\beta}\left(-\frac{y}{\alpha (e^{\theta}-1)t^{\beta}}\right)\mathrm{d}y,
\end{equation*}
the proof follows.
\end{proof}

\begin{remark}
An equivalent form of the pmf of BTP can be obtained from \eqref{manina}. It is given by 
\begin{equation}\label{alterpnt1}
q(n,t)=\sum_{\Omega^{n}_{k}}\prod_{j=1}^{k}\frac{\theta^{x_{j}}}{x_{j}!}\frac{(\alpha t)^{k}}{k!}e^{-\alpha t\left(e^{\theta}-1\right)},\ n\ge0,
\end{equation}
where $\Omega^{n}_{k}$ is given in \eqref{okn}.
If we use \eqref{alterpnt1} in the proof of Theorem \ref{thm4.21} then we can obtain the following alternate form of the pmf of FBTP: 
\begin{equation}\label{pmf2}
q_{\beta}(n,t)=\sum_{\Omega^{n}_{k}}\prod_{j=1}^{k}\frac{\theta^{x_{j}}}{x_{j}!}\left(\alpha t^{\beta}\right)^{k}E_{\beta, k\beta+1}^{k+1}\left(-\alpha (e^{\theta}-1)t^{\beta}\right).
\end{equation}
\end{remark}

The pmf of TFPP with intensity $\alpha\left(e^{\theta}-1\right)$ is given by (see Beghin and Orsingher (2010), Eq. (2.5))
\begin{equation*}
\mathrm{Pr}\{N_{\beta}(t)=n\}=\left(\alpha(e^{\theta}-1)t^{\beta}\right)^{n}E_{\beta,n\beta+1}^{n+1}\left(-\alpha(e^{\theta}-1)t^{\beta}\right),\ n\ge0.
\end{equation*}
From (\ref{compoundr1}), we have
\begin{equation*}
q_{\beta}(0,t)=\mathrm{Pr}\{N_{\beta}(t)=0\}=E_{\beta,1}\left(-\alpha(e^{\theta}-1)t^{\beta}\right).
\end{equation*}
We recall that $X_i$'s are independent of $N_{\beta}(t)$ in (\ref{compoundr1}). Thus, for $n\ge1$, we have
\begin{align}\label{rdee32}
q_{\beta}(n,t)&=\sum_{k=1}^{n}\mathrm{Pr}\{X_{1}+X_{2}+\dots+X_{k}=n\}\mathrm{Pr}\{N_{\beta}(t)=k\}\\
&=\sum_{k=1}^{n}\sum_{\Theta_{n}^{k}}k!\prod_{j=1}^{n}\frac{(\theta^{j}/j!)^{x_{j}}}{x_{j}!}\left(\alpha t^{\beta}\right)^{k}E_{\beta,k\beta+1}^{k+1}\left(-\alpha(e^{\theta}-1)t^{\beta}\right),\label{tkn}
\end{align}
where $x_j$ is the total number of claims of $j$ units and 
\begin{equation}\label{tknd}
\Theta_{n}^{k}=\left\{(x_{1},x_{2},\dots,x_{n}):\sum_{j=1}^{n}x_{j}=k,\ \sum_{j=1}^{n}jx_{j}=n,\ x_{j}\in\mathbb{N}\cup \{0\}\right\}.	
\end{equation}
Again as $X_{i}$'s are iid, we have
\begin{align}\label{sweee32}
\mathrm{Pr}\{X_{1}+X_{2}+\dots+X_{k}=n\}&=\underset{m_j\in\mathbb{N}}{\underset{m_{1}+m_{2}+\dots+m_{k}=n}{\sum}}\mathrm{Pr}\{X_{1}=m_1,X_{2}=m_2,\ldots,X_{k}=m_k\}\nonumber\\
&=\underset{m_j\in\mathbb{N}}{\underset{m_{1}+m_{2}+\dots+m_{k}=n}{\sum}}\prod_{j=1}^{k}\mathrm{Pr}\{X_{j}=m_j\}\nonumber\\
&=\underset{m_j\in\mathbb{N}}{\underset{m_{1}+m_{2}+\dots+m_{k}=n}{\sum}}\frac{1}{(e^{\theta}-1)^{k}}\prod_{j=1}^{k}\frac{\theta^{m_{j}}}{m_{j}!},	
\end{align}
where we have used (\ref{ajjagh12}). On substituting (\ref{sweee32}) in (\ref{rdee32}), we get an equivalent expression for the pmf of FBTP in the following form:
\begin{equation}\label{pmf41}
q_{\beta}(n,t)=\sum_{k=1}^{n}\underset{m_j\in\mathbb{N}}{\underset{m_{1}+m_{2}+\dots+m_{k}=n}{\sum}}\prod_{j=1}^{k}\frac{\theta^{m_{j}}}{m_{j}!}\left(\alpha t^{\beta}\right)^{k}E_{\beta,k\beta+1}^{k+1}\left(-\alpha(e^{\theta}-1)t^{\beta}\right).	
\end{equation}

The pmf \eqref{tkn} can be written in the following equivalent form by using Lemma 2.4 of Kataria and Vellaisamy (2017b):
\begin{equation}\label{pm42}
q_{\beta}(n,t)=\sum_{k=1}^{n}\sum_{\Lambda_{n}^{k}}k!\prod_{j=1}^{n-k+1}\frac{(\theta^{j}/j!)^{x_{j}}}{x_{j}!}\left(\alpha t^{\beta}\right)^{k}E_{\beta,k\beta+1}^{k+1}\left(-\alpha(e^{\theta}-1)t^{\beta}\right).
\end{equation}
where 
\begin{equation}\label{Lkn}
	\Lambda_{n}^{k}=\left\{(x_{1},x_{2},\dots,x_{n-k+1}):\sum_{j=1}^{n-k+1}x_{j}=k, \ \  \sum_{j=1}^{n-k+1}jx_{j}=n, \ x_{j}\in\mathbb{N}\cup \{0\}\right\}.
\end{equation}
Thus, we have obtained the five alternate forms of the pmf of FBTP given in (\ref{pmf1}), (\ref{pmf2}), \eqref{tkn}, \eqref{pmf41} and \eqref{pm42}. 

By using (\ref{meanbtp}), (\ref{varbtp}) and Theorem 2.1 of Leonenko {\it et al.} (2014), the mean, variance and  covariance of FBTP can be obtained in the following form:
\begin{align}
\mathbb{E}\left(\mathcal{M}_{\beta}(t)\right)&=\alpha\theta e^{\theta}\mathbb{E}\left(Y_{\beta}(t)\right),\nonumber
\\
\operatorname{Var}\left(\mathcal{M}_{\beta}(t)\right)&=\alpha\theta(\theta+1)e^{\theta}\mathbb{E}\left(Y_{\beta}(t)\right)+\left(\alpha\theta e^{\theta}\right)^{2}\operatorname{Var}\left(Y_{\beta}(t)\right),\label{vs2}\\\operatorname{Cov}\left(\mathcal{M}_{\beta}(s),\mathcal{M}_{\beta}(t)\right)&=\alpha\theta(\theta+1)e^{\theta}\mathbb{E}\left(Y_{\beta}(\min\{s,t\})\right)+\left(\alpha\theta e^{\theta}\right)^{2}\operatorname{Cov}\left(Y_{\beta}(s),Y_{\beta}(t)\right).\label{cs3}
\end{align}
The FBTP exhibits overdispersion as $\operatorname{Var}\left(\mathcal{M}_{\beta}(t)\right)-\mathbb{E}\left(\mathcal{M}_{\beta}(t)\right)>0$ for all $t>0$.

\begin{theorem}\label{thm4.5}
The FBTP exhibits the LRD property.
\end{theorem}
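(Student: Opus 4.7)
My plan is to verify the LRD definition from Section~2.5 directly by computing the asymptotic behaviour of $\operatorname{Corr}(\mathcal{M}_\beta(s),\mathcal{M}_\beta(t))$ as $t\to\infty$ for fixed $s>0$. Every ingredient required has already been assembled earlier in the paper: the second moments of the FBTP are given by (\ref{vs2}) and (\ref{cs3}) in terms of $\mathbb{E}(Y_\beta(\cdot))$, $\operatorname{Var}(Y_\beta(\cdot))$ and $\operatorname{Cov}(Y_\beta(\cdot),Y_\beta(\cdot))$, whose exact values and large-$t$ asymptotics are collected in (\ref{meani}), (\ref{xswe331}) and (\ref{asi1}).

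For the denominator, I would substitute (\ref{meani}) and (\ref{xswe331}) into (\ref{vs2}), producing a sum of a term of order $t^\beta$ and a term of order $t^{2\beta}$. Since $0<\beta<1$ gives $2\beta>\beta$, the $t^{2\beta}$ piece dominates with strictly positive coefficient $(\alpha\theta e^\theta)^2(2/\Gamma(2\beta+1)-1/\Gamma^2(\beta+1))$, so $\sqrt{\operatorname{Var}(\mathcal{M}_\beta(t))}$ is asymptotically a constant multiple of $t^\beta$.

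For the numerator, I would take $t$ large enough that $\min\{s,t\}=s$ in (\ref{cs3}); the first summand there is then a fixed constant in $t$, while by (\ref{asi1}) the second summand tends to the positive constant $(\alpha\theta e^\theta)^2\beta s^{2\beta}B(\beta,\beta+1)/\Gamma^2(\beta+1)$, the $t^{\beta-1}$ correction being subleading because $\beta<1$. Hence $\operatorname{Cov}(\mathcal{M}_\beta(s),\mathcal{M}_\beta(t))\to C(s)>0$.

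Combining the two bounds yields $\operatorname{Corr}(\mathcal{M}_\beta(s),\mathcal{M}_\beta(t))\sim c(s)\,t^{-\beta}$ with $c(s)>0$ and exponent $\beta\in(0,1)$, which is precisely the LRD property of Section~2.5 (the LRD exponent $\theta$ in the definition being played here by $\beta$). The argument is essentially bookkeeping; the only point worth checking carefully is that the coefficient $2/\Gamma(2\beta+1)-1/\Gamma^2(\beta+1)$ in the leading variance asymptotic is strictly positive for $\beta\in(0,1)$, which is automatic because $Y_\beta$ is non-degenerate and this quantity is exactly the coefficient of $t^{2\beta}$ in $\operatorname{Var}(Y_\beta(t))$.
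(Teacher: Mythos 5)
Your proposal is correct and follows essentially the same route as the paper: it forms the correlation from (\ref{vs2}) and (\ref{cs3}), substitutes the asymptotics (\ref{meani}), (\ref{xswe331}) and (\ref{asi1}) for fixed $s$ and large $t$, and concludes $\operatorname{Corr}(\mathcal{M}_\beta(s),\mathcal{M}_\beta(t))\sim c(s)t^{-\beta}$ with $\beta\in(0,1)$. The only cosmetic difference is that you argue the numerator tends to a constant and the denominator grows like $t^{\beta}$ separately, while the paper writes the combined asymptotic expression at once; the positivity check of $2/\Gamma(2\beta+1)-1/\Gamma^{2}(\beta+1)$ you mention is a sound (implicit in the paper) justification of the leading variance term.
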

\begin{proof}
From (\ref{vs2}) and (\ref{cs3}), we get
\begin{equation*}
\operatorname{Corr}	\left(\mathcal{M}_{\beta}(s),\mathcal{M}_{\beta}(t)\right)=\frac{\alpha\theta(\theta+1)e^{\theta}\mathbb{E}\left(Y_{\beta}(\min\{s,t\})\right)+(\alpha\theta
e^{\theta})^{2}\operatorname{Cov}\left(Y_{\beta}(s),Y_{\beta}(t)\right)}{\sqrt{\operatorname{Var}\left(\mathcal{M}_{\beta}(s)\right)}\sqrt{\alpha\theta(\theta+1)e^{\theta}\mathbb{E}\left(Y_{\beta}(t)\right)+(\alpha\theta e^{\theta})^{2}\operatorname{Var}\left(Y_{\beta}(t)\right)}}.
\end{equation*}
On using (\ref{meani})-(\ref{asi1}) for fixed $s$ and large $t$, we get
\begin{align*}
\operatorname{Corr}	(\mathcal{M}_{\beta}(s),&\mathcal{M}_{\beta}(t))\\
&\sim \frac{\alpha\theta(\theta+1)e^{\theta}\Gamma^2(\beta+1)\mathbb{E}\left(Y_{\beta}(s)\right)+(\alpha\theta e^{\theta})^{2}\left( \beta s^{2\beta}B(\beta,\beta+1)-\frac{\beta^{2}s^{\beta+1}}{(\beta+1)t^{1-\beta}}\right)}{\Gamma^2(\beta+1)\sqrt{\operatorname{Var}\left(\mathcal{M}_{\beta}(s)\right)}\sqrt{\frac{\alpha\theta(\theta+1)e^{\theta}t^{\beta}}{\Gamma(\beta+1)}+\frac{2(\alpha\theta e^{\theta})^{2}t^{2\beta}}{\Gamma(2\beta+1)}-\frac{(\alpha\theta e^{\theta})^{2}t^{2\beta}}{\Gamma^{2}(\beta+1)}}}\\
&\sim c_{0}(s)t^{-\beta},
\end{align*}
where
\begin{equation*}
c_{0}(s)=	\frac{(\theta+1)\Gamma^2(\beta+1)\mathbb{E}\left(Y_{\beta}(s)\right)+\alpha\theta e^{\theta}\beta s^{2\beta}B(\beta,\beta+1)}{\Gamma^2(\beta+1)\sqrt{\operatorname{Var}\left(\mathcal{M}_{\beta}(s)\right)}\sqrt{\frac{2}{\Gamma(2\beta+1)}-\frac{1}{\Gamma^{2}(\beta+1)}}}.
\end{equation*}
As $0<\beta<1$, it follows that the FBTP has the LRD property.
\end{proof}
\begin{remark}\label{srd}
For a fixed $h>0$, the increment process of FBTP is defined as
\begin{equation*}
\mathcal{Z}_{\beta}^{h}(t)\coloneqq \mathcal{M}_{\beta}(t+h)-\mathcal{M}_{\beta}(t),\ t\ge0.
\end{equation*}
It can be shown that the increment process $\{\mathcal{Z}_{\beta}^{h}(t)\}_{t\ge0}$ exhibits the SRD property. The proof follows similar lines to that of Theorem 1 of Maheshwari and Vellaisamy (2016).
\end{remark}

The factorial moments of the FBTP can be obtained by taking $\lambda_{j}=\alpha \theta^{j}/j!$ for all $j\ge 1$ and letting $k\to \infty$ in Proposition 4 of Kataria and Khandakar (2022).
\begin{proposition}
Let $\psi_{\beta}(r,t)=	\mathbb{E}(\mathcal{M}_{\beta}(t)(\mathcal{M}_{\beta}(t)-1)\cdots(\mathcal{M}_{\beta}(t)-r+1))$, $r\ge1$ be the $r$th factorial moment of FBTP. Then,
\begin{equation*}
\psi_{\beta}(r,t)=r!\sum_{n=1}^{r}\frac{\left(\alpha e^{\theta}t^{\beta}\right)^{n}}{\Gamma(n\beta+1)}\underset{m_i\in\mathbb{N}}{\underset{\sum_{i=1}^nm_i=r}{\sum}}\prod_{\ell=1}^{n}\frac{\theta^{m_\ell}}{m_\ell!}.
\end{equation*}
\end{proposition}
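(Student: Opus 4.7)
The plan is to compute $\psi_\beta(r,t)$ directly from the pgf \eqref{pgfftbp} by differentiating $r$ times in $u$ and evaluating at $u=1$. Expanding the Mittag-Leffler function as a power series gives
\[
G_\beta(u,t) = \sum_{n=0}^{\infty} \frac{(\alpha t^\beta)^n (e^{\theta u} - e^\theta)^n}{\Gamma(n\beta+1)},
\]
and since this series converges absolutely on $|u|\le 1$ termwise differentiation is legitimate, yielding
\[
\psi_\beta(r,t) = \sum_{n=0}^{\infty} \frac{(\alpha t^\beta)^n}{\Gamma(n\beta+1)} \left.\frac{\mathrm{d}^r}{\mathrm{d}u^r}(e^{\theta u}-e^\theta)^n\right|_{u=1}.
\]

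Next I would Taylor-expand the base about $u=1$. Writing $e^{\theta u}-e^\theta = e^\theta(e^{\theta(u-1)}-1) = e^\theta\sum_{m\ge1}\theta^m(u-1)^m/m!$ and applying the multinomial theorem yields
\[
(e^{\theta u}-e^\theta)^n = e^{n\theta}\sum_{s\ge n}(u-1)^s \underset{m_i\in\mathbb{N}}{\underset{m_1+\dots+m_n=s}{\sum}}\prod_{\ell=1}^{n}\frac{\theta^{m_\ell}}{m_\ell!}.
\]
In particular $(e^{\theta u}-e^\theta)^n$ has a zero of order $n$ at $u=1$, so its $r$-th derivative there vanishes for $r<n$, and for $r\ge n$ it equals $r!$ times the coefficient of $(u-1)^r$, namely $r!\,e^{n\theta}\sum \prod_\ell \theta^{m_\ell}/m_\ell!$ with the inner sum taken over compositions $m_1+\dots+m_n=r$, $m_i\in\mathbb{N}$.

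Substituting this back into the series for $\psi_\beta(r,t)$, the infinite sum truncates at $n=r$ and combining $e^{n\theta}$ with $(\alpha t^\beta)^n$ gives $(\alpha e^\theta t^\beta)^n$, producing precisely the asserted identity. As a consistency check, the same expression is obtained by setting $\lambda_j=\alpha\theta^j/j!$ for every $j\ge 1$ and letting $k\to\infty$ in Proposition 4 of Kataria and Khandakar (2022), which is the route indicated by the authors.

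The main obstacle is the combinatorial bookkeeping at the Taylor step: one must recognise that $e^{\theta u}-e^\theta$ vanishes to first order at $u=1$, so its $n$-th power starts at $(u-1)^n$, and then identify the coefficient of $(u-1)^r$ as a sum over compositions of $r$ into exactly $n$ positive parts, which is precisely the inner sum in the proposition. Once this identification is made, the conclusion is immediate.
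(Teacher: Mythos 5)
Your proof is correct, but it takes a genuinely different route from the paper. The paper obtains the formula by specializing Proposition 4 of Kataria and Khandakar (2022) --- the factorial-moment formula for the GFCP --- with $\lambda_j=\alpha\theta^j/j!$ and letting $k\to\infty$; there the factor $e^{n\theta}$ arises from the summation $\sum_{j\ge m}\alpha\frac{\theta^j}{j!}\,j(j-1)\cdots(j-m+1)=\alpha\theta^m e^{\theta}$, and the only delicate point is passing to the limit $k\to\infty$ inside that formula. You instead differentiate the pgf \eqref{pgfftbp} $r$ times at $u=1$, which makes the argument self-contained within this paper (it uses only the pgf already derived here) at the cost of the combinatorial bookkeeping, which you carry out correctly: $(e^{\theta u}-e^\theta)^n=e^{n\theta}(e^{\theta(u-1)}-1)^n$ vanishes to order $n$ at $u=1$, its coefficient of $(u-1)^r$ is $e^{n\theta}$ times the sum over compositions of $r$ into exactly $n$ positive parts, so the Mittag-Leffler series truncates at $n=r$ (the $n=0$ term contributing nothing for $r\ge1$) and the stated identity follows. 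One small repair: absolute convergence on $|u|\le 1$ is not by itself the right justification for term-by-term differentiation at the boundary point $u=1$; the correct reason is that $E_{\beta,1}$ is entire and $u\mapsto e^{\theta u}-e^{\theta}$ is entire, so the series defining $G_\beta(u,t)$ converges locally uniformly on all of $\mathbb{C}$, hence $G_\beta(\cdot,t)$ extends to an entire function whose derivatives at $u=1$ may be computed termwise; this analytic continuation across $u=1$ is also what guarantees that all factorial moments are finite and coincide with these derivatives.
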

The proof of the next result follows on using \eqref{lemmaar}, the self-similarity property of $\{Y_{\beta}(t)\}_{t\ge0}$ in \eqref{ybt} and the arguments used in Proposition 3 of Kataria and Khandakar (2022).
\begin{proposition}
The one-dimensional distributions of FBTP are not infinitely divisible.
\end{proposition}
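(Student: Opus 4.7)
The plan is to argue by contradiction, following the roadmap indicated by the authors' hint. Assume that for some $t > 0$ the one-dimensional distribution of $\mathcal{M}_\beta(t)$ is infinitely divisible. The first step is to combine the subordination \eqref{ybt} with the self-similarity of the inverse stable subordinator, $Y_\beta(t) \stackrel{d}{=} t^\beta Y_\beta(1)$, to rewrite
$$
\frac{\mathcal{M}_\beta(t)}{t^\beta} \stackrel{d}{=} \frac{\mathcal{M}\!\left(t^\beta Y_\beta(1)\right)}{t^\beta Y_\beta(1)} \cdot Y_\beta(1),
$$
where the BTP $\{\mathcal{M}(\cdot)\}$ is independent of $Y_\beta(1)$. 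Since $Y_\beta(1) > 0$ almost surely, conditioning on $Y_\beta(1)$ and invoking the in-probability limit \eqref{lemmaar} shows that the first factor converges to $\alpha\theta e^\theta$ in probability as $t \to \infty$. Slutsky's theorem then yields
$$
\frac{\mathcal{M}_\beta(t)}{t^\beta} \xrightarrow{d} \alpha\theta e^\theta\, Y_\beta(1), \qquad t \to \infty.
$$

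The second step is to exploit that infinite divisibility is preserved both by scaling by a positive constant and by weak limits. Under the standing assumption, $\mathcal{M}_\beta(t)/t^\beta$ is infinitely divisible for each $t$, and hence so is its weak limit $\alpha\theta e^\theta Y_\beta(1)$; dividing by the positive constant $\alpha\theta e^\theta$ forces $Y_\beta(1)$ itself to be infinitely divisible. However, the distribution of $Y_\beta(1)$, whose Laplace transform is $E_{\beta,1}(-s)$ by \eqref{ybx}, is known not to be infinitely divisible for $0 < \beta < 1$; this is the classical fact invoked in the cited Proposition 3 of Kataria and Khandakar (2022). The contradiction proves the claim.

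The main obstacle is the Slutsky-type passage, because the time at which the BTP is evaluated, namely $t^\beta Y_\beta(1)$, is itself random, so the in-probability limit in \eqref{lemmaar} cannot be applied in a naive way. I would handle this by conditioning on the $\sigma$-algebra generated by $Y_\beta(1)$, applying \eqref{lemmaar} pathwise on the event $\{Y_\beta(1) > 0\}$ (which has full measure), and then pushing the limit through the outer expectation via dominated convergence at the level of characteristic functions. The remaining ingredient, the failure of infinite divisibility for $Y_\beta(1)$ when $0 < \beta < 1$, is a standard fact about inverse stable subordinators and requires no separate argument here.
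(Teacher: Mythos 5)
Your argument is essentially the paper's own proof: the authors' justification consists precisely of combining \eqref{lemmaar}, the self-similarity $Y_{\beta}(t)\stackrel{d}{=}t^{\beta}Y_{\beta}(1)$ applied in \eqref{ybt}, and the weak-convergence/contradiction argument of Proposition 3 of Kataria and Khandakar (2022), i.e.\ showing $\mathcal{M}_{\beta}(t)/t^{\beta}\Rightarrow \alpha\theta e^{\theta}Y_{\beta}(1)$ and invoking the known non-infinite-divisibility of $Y_{\beta}(1)$, exactly as you do. One small repair: start the contradiction by assuming infinite divisibility of $\mathcal{M}_{\beta}(t)$ for all $t$ (or at least along a sequence $t_n\to\infty$), not merely ``for some $t>0$'', since your later step needs $\mathcal{M}_{\beta}(t)/t^{\beta}$ to be infinitely divisible for arbitrarily large $t$ before passing to the weak limit.
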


\begin{remark}\label{remark3.9}
Let the random variable $\mathcal{W}_1$ be the first waiting time of FBTP. Then, the distribution of $\mathcal{W}_1$ is given by
\begin{equation*}
\mathrm{Pr}\{\mathcal{W}_1>t\}=\mathrm{Pr}\{\mathcal{M}_{\beta}(t)=0\}=E_{\beta, 1}\left(-\alpha (e^{\theta}-1)t^{\beta}\right),
\end{equation*}
which coincides with the first waiting time of TFPP with intensity $\alpha (e^{\theta}-1)$ (see Kataria and Vellaisamy (2017a), Remark 3.3). However,
the one-dimensional distributions of TFPP and FBTP differ. Thus, the fact that the TFPP is a
renewal process (see Meerschaert {\it et al.} (2011)) implies that the FBTP is not a renewal process.
\end{remark}
\section{Poisson-logarithmic process and its fractional version}
Here, we introduce a fractional version of the PLP. First, we give some additional properties of it.

On taking $\beta=1$, $\lambda_{j}=-\lambda (1-p)^{j}/j\ln p$ for all $j\ge 1$ and letting $k\to \infty$, the governing system of differential equations (\ref{cre}) for GCP reduces to the governing system of differential equations  (\ref{btpgov3}) of PLP. Thus, the PLP is a limiting case of the GCP. Thus, we note that several results for PLP can be obtained from the corresponding results for GCP.

The next result gives a recurrence relation for the pmf of PLP that follows from Proposition 1 of Kataria and Khandakar (2022).
 \begin{proposition}
The  state probabilities $\hat{q}(n,t)=\mathrm{Pr}\{\hat{\mathcal{M}}(t)=n\}$, $n\ge1$ of PLP satisfy
\begin{equation*}
\hat{q}(n,t)=-\frac{\lambda t}{n\ln p}\sum_{j=1}^{n}(1-p)^{j}\hat{q}(n-j,t).	
\end{equation*}
\end{proposition}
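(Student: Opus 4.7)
The plan is to apply Proposition 1 of Kataria and Khandakar (2022), which gives a recurrence for the state probabilities of the GCP of the general form
\begin{equation*}
p(n,t)=\frac{t}{n}\sum_{j=1}^{\min\{n,k\}}j\lambda_{j}\,p(n-j,t),\quad n\ge 1.
\end{equation*}
One can read off this form by checking consistency with the recurrence already established for the BTP: with $\lambda_{j}=\alpha\theta^{j}/j!$ one has $j\lambda_{j}=\alpha\theta^{j}/(j-1)!$, and the formula above reproduces exactly the BTP recurrence stated just after Remark \ref{rem1}.

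Next I would specialize this generic recurrence to the PLP rates $\lambda_{j}=-\lambda(1-p)^{j}/(j\ln p)$, $j\ge 1$, which (as noted in the paragraph immediately preceding the statement) turn the GCP into the PLP when one lets $k\to\infty$ in \eqref{cre}. Under this substitution the leading factor $j$ inside the sum cancels the $j$ in the denominator of $\lambda_{j}$, yielding $j\lambda_{j}=-\lambda(1-p)^{j}/\ln p$, and pulling the constant $-\lambda/\ln p$ outside the sum produces
\begin{equation*}
\hat{q}(n,t)=-\frac{\lambda t}{n\ln p}\sum_{j=1}^{n}(1-p)^{j}\hat{q}(n-j,t),
\end{equation*}
which is the claim. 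The upper limit of summation drops from $\min\{n,k\}$ to $n$ in the limit $k\to\infty$ for free, since for fixed $n$ only indices $j\le n$ contribute.

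The only detail requiring any verification is the legitimacy of the passage $k\to\infty$, but for fixed $n$ the sum contains at most $n$ terms and the total intensity $\sum_{j=1}^{\infty}\lambda_{j}=\lambda$ is finite (using the series $-\ln p=\sum_{j=1}^{\infty}(1-p)^{j}/j$), so this is exactly the same limiting argument that was used to pass from \eqref{cre} to the governing system \eqref{btpgov3} of the PLP. Thus the argument reduces to a plug-and-simplify, with no genuine obstacle beyond this routine limiting step.
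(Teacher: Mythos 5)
Your proposal is correct and matches the paper's own route: the paper likewise obtains this recurrence by specializing Proposition 1 of Kataria and Khandakar (2022) (the GCP recurrence $p(n,t)=\tfrac{t}{n}\sum_{j=1}^{\min\{n,k\}}j\lambda_{j}p(n-j,t)$, which your consistency check against the BTP and GPAP recurrences correctly identifies) to the rates $\lambda_{j}=-\lambda(1-p)^{j}/(j\ln p)$ with $k\to\infty$. The cancellation $j\lambda_{j}=-\lambda(1-p)^{j}/\ln p$ and the harmless passage to the limit for fixed $n$ are exactly the computation the paper leaves implicit.
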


The pgf \eqref{pgfplp} can be rewritten as
\begin{equation*}
\hat{G}(u,t)=\prod_{j=1}^{\infty}\exp \left(-\frac{\lambda t}{\ln p} \frac{(1-p)^{j}}{j}(u^{j}-1)\right).
\end{equation*}
It follows that the PLP is equal in distribution to a weighted sum of independent Poisson processes, that is,
\begin{equation}\label{rela}
\hat{\mathcal{M}}(t)\stackrel{d}{=}\sum_{j=1}^{\infty}jN_{j}(t),
\end{equation}	
where $\{N_{j}(t)\}_{t\ge0}$ is a Poisson process with intensity $\lambda_{j}=-\lambda (1-p)^{j}/j\ln p$. Thus, 
\begin{align}\label{limit plp}
\lim_{t\to\infty}\frac{\hat{\mathcal{M}}(t)}{t}&\stackrel{d}{=}\sum_{j=1}^{\infty}j\lim_{t\to\infty}\frac{N_{j}(t)}{t}\nonumber\\
&=\frac{\lambda(p-1)}{p\ln p},\ \text{in probability},
\end{align}
where we have used $\lim_{t\to\infty}N_{j}(t)/t=-\lambda (1-p)^{j}/j\ln p$, $j\ge1$ a.s. 
	
In view of \eqref{janina} and \eqref{rela}, the pmf of PLP is given by
\begin{equation}\label{p(n,t)13}
\hat{q}(n,t)=\sum_{\Omega_{n}}\prod_{j=1}^{n}\frac{\left((1-p)^{j}/j\right)^{x_{j}}}{x_{j}!}\left(\frac{-\lambda t}{\ln p}\right)^{z_{n}}e^{-\lambda t}, 
\end{equation}
where $z_{n}=x_{1}+x_{2}+\dots+x_{n}$ and $\Omega_{n}$ is given in \eqref{onn}. 

Using \eqref{manina}, the pmf of PLP can alternatively be written as 
\begin{equation}\label{alterpnt3}
\hat{q}(n,t)=\sum_{\Omega^{n}_{k}}\prod_{j=1}^{k}\frac{(1-p)^{x_{j}}}{x_{j}k!}\left(-\frac{\lambda t}{\ln p}\right)^{k}e^{-\lambda t},
\end{equation}
where $\Omega^{n}_{k}$ is given in \eqref{okn}.

The next result gives a martingale characterization for the PLP whose proof follows from Proposition 2 of Kataria and Khandakar (2022).
 \begin{proposition}
The process $\left\{\hat{\mathcal{M}}(t)-\dfrac{\lambda(p-1)}{p\ln p}t\right\}_{t\geq0}$ is a martingale with respect to a natural filtration $\mathscr{F}_{t}=\sigma\left(\hat{\mathcal{M}}(s), s\le t\right)$.
\end{proposition}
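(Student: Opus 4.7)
The plan is to mimic the martingale argument that presumably appears in Proposition 2 of Kataria and Khandakar (2022) for the GCP, transported to the PLP setting via the limiting relation $\lambda_{j}=-\lambda(1-p)^{j}/(j\ln p)$ noted at the start of this section. The structural reason the statement should hold is that the PLP is a compound Poisson process (hence a L\'evy process) with independent and stationary increments, and $\lambda(p-1)/(p\ln p)$ is precisely its mean rate, as already identified in \eqref{limit plp}.

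First I would verify integrability by differentiating the pgf \eqref{pgfplp}: a routine calculation gives
\begin{equation*}
\mathbb{E}\bigl(\hat{\mathcal{M}}(t)\bigr)=\frac{\partial}{\partial u}\hat{G}(u,t)\Big|_{u=1}=\frac{\lambda(p-1)}{p\ln p}\,t,
\end{equation*}
where I use that $\hat{G}(1,t)=1$ and $1-(1-p)u\big|_{u=1}=p$. This simultaneously confirms $\mathbb{E}|\hat{\mathcal{M}}(t)|<\infty$ and identifies the correct compensating drift.

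Next, for $0\le s\le t$, I would split $\hat{\mathcal{M}}(t)=\hat{\mathcal{M}}(s)+(\hat{\mathcal{M}}(t)-\hat{\mathcal{M}}(s))$ and use the independent and stationary increments of the compound Poisson representation \eqref{plp1}: since $\hat{\mathcal{M}}(t)-\hat{\mathcal{M}}(s)$ is independent of $\mathscr{F}_{s}$ and equal in distribution to $\hat{\mathcal{M}}(t-s)$, we have
\begin{equation*}
\mathbb{E}\bigl(\hat{\mathcal{M}}(t)\,\big|\,\mathscr{F}_{s}\bigr)=\hat{\mathcal{M}}(s)+\mathbb{E}\bigl(\hat{\mathcal{M}}(t-s)\bigr)=\hat{\mathcal{M}}(s)+\frac{\lambda(p-1)}{p\ln p}(t-s).
\end{equation*}
Subtracting $\lambda(p-1)t/(p\ln p)$ from both sides gives the martingale identity.

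The argument is essentially routine; no step looks like a genuine obstacle since integrability and the drift are both handed to us by the explicit pgf \eqref{pgfplp} and the Poisson-based construction of the process. The only mild care needed is to confirm that the compound Poisson representation indeed yields independent and stationary increments with respect to the natural filtration $\mathscr{F}_{t}$, which is immediate because the underlying Poisson process $\{N(t)\}_{t\ge0}$ is a L\'evy process and the jump sizes $\{X_{i}\}_{i\ge1}$ are iid and independent of $\{N(t)\}_{t\ge0}$.
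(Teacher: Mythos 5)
Your proof is correct, but it takes a more self-contained route than the paper does. The paper disposes of this proposition in one line: it invokes Proposition 2 of Kataria and Khandakar (2022), the martingale characterization of the GCP (the compensated process $M(t)-t\sum_j j\lambda_j$), together with the observation made at the start of the section that the PLP is the limiting case of the GCP obtained by setting $\lambda_{j}=-\lambda(1-p)^{j}/(j\ln p)$ and letting $k\to\infty$; note that $\sum_{j\ge1} j\lambda_j=-\frac{\lambda}{\ln p}\sum_{j\ge1}(1-p)^j=\frac{\lambda(p-1)}{p\ln p}$ recovers exactly your drift. You instead argue directly from the compound Poisson representation \eqref{plp1}: you compute $\mathbb{E}\bigl(\hat{\mathcal{M}}(t)\bigr)=\frac{\lambda(p-1)}{p\ln p}t$ from the pgf \eqref{pgfplp} (the differentiation is right, and nonnegativity of the process gives integrability), then use independent and stationary increments to condition on $\mathscr{F}_s$. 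Both arguments are sound; yours has the advantage of not leaning on an external proposition and of sidestepping any discussion of the $k\to\infty$ passage, while the paper's approach is uniform across the BTP, PLP and GPAP sections and emphasizes the structural point that all three processes inherit their properties from the GCP. The only point worth making explicit in your write-up is the one you already flag at the end: the increment $\hat{\mathcal{M}}(t)-\hat{\mathcal{M}}(s)$ is independent of $\mathscr{F}_{s}=\sigma(\hat{\mathcal{M}}(u),u\le s)$ because the compound Poisson process is a L\'evy process, so conditioning on its own natural filtration is legitimate.
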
 

Let $\hat{r}_{1}=\lambda(p-1)/p\ln p$ and $\hat{r}_{2}=\lambda(p-1)/p^{2}\ln p$. From \eqref{plp1}, it follows that the PLP is a L\'evy process. Its L\'evy measure can be obtained by taking $\lambda_{j}=-\lambda (1-p)^{j}/j\ln p$ for all $j\ge 1$ and letting $k\to \infty$ in Eq. (13) of Kataria and Khandakar (2022). It is given by 
\begin{equation*}
\hat{\mu}(\mathrm{d}x)=-\frac{\lambda}{\ln p}\sum_{j=1}^{\infty}\frac{(1-p)^{j}}{j}\delta_{j}\mathrm{d}x,
\end{equation*}
where $\delta_{j}$'s are Dirac measures. Its mean, variance and covariance are given by
\begin{equation}\label{meanplp}
E(\hat{\mathcal{M}}(t))=\hat{r}_{1}t, \ \operatorname{Var}(\hat{\mathcal{M}}(t))=\hat{r}_{2}t,\ 
\operatorname{Cov}(\hat{\mathcal{M}}(s),\hat{\mathcal{M}}(t))=\hat{r}_{2}\min\{s,t\}.
\end{equation}

\subsection{Fractional Poisson-logarithmic process}
Here, we introduce a fractional version of the PLP, namely, the fractional Poisson-logarithmic process (FPLP). We define it as the stochastic process  $\{\hat{\mathcal{M}}_{\beta}(t)\}_{t\ge0}$, $0<\beta\le 1$,  whose state probabilities  $\hat{q}_{\beta}(n,t)=\mathrm{Pr}\{\hat{\mathcal{M}}_{\beta}(t)=n\}$ satisfy the following system of differential equations: 
\begin{equation}\label{fbtpgov3}
\begin{aligned}
\frac{\mathrm{d}^{\beta}}{\mathrm{d}t^{\beta}}\hat{q}_{\beta}(0,t)&=-\lambda \hat{q}_{\beta}(0,t),\\
\frac{\mathrm{d}^{\beta}}{\mathrm{d}t^{\beta}}\hat{q}_{\beta}(n,t)&=-\lambda \hat{q}_{\beta}(n,t)-\frac{\lambda}{\ln p}\sum_{j=1}^{n}\frac{(1-p)^{j}}{j}\hat{q}_{\beta}(n-j,t),\ n\ge 1,
\end{aligned}
\end{equation}
 with initial conditions $\hat{q}_{\beta}(0,0)=1$ and $\hat{q}_{\beta}(n,0)=0$, $n\ge1$.
 
Note that the system of equations (\ref{fbtpgov3}) is obtained by replacing the integer order derivative in (\ref{btpgov3}) by Caputo fractional derivative.
 
\begin{remark}
On taking $\lambda_{j}=-\lambda (1-p)^{j}/j\ln p$ for all $j\ge 1$ and letting $k\to \infty$, the System (\ref{cre}) reduces to the System (\ref{fbtpgov3}). Thus, the FPLP is a limiting case of the GFCP.
	
Also, on taking $\lambda=-\ln p$, the System \eqref{fbtpgov3} reduces to the system of differential equations that governs the state probabilities of fractional negative binomial process (see Beghin (2015), Eq. (66)).
 \end{remark}
 Using \eqref{fbtpgov3}, it can be shown that the pgf $\hat{G}_{\beta}(u,t)=\mathbb{E}\left(u^{\hat{\mathcal{M}}_{\beta}(t)}\right)$, $|u|\leq1$ of FPLP satisfies
\begin{equation*}
\frac{\mathrm{d}^{\beta}}{\mathrm{d}t^{\beta}}\hat{G}_{\beta}(u,t)=-\lambda \left(1-\frac{\ln(1-(1-p)u)}{\ln p}\right)\hat{G}_{\beta}(u,t),
\end{equation*}
with $\hat{G}_{\beta}(u,0)=1$. On taking the Laplace transform in the above equation and using \eqref{lc}, we get
\begin{equation*}
s^{\beta}\tilde{\hat{G}}_{\beta}(u,s)-s^{\beta-1}\hat{G}_{\beta}(u,0)=-\lambda \left(1-\frac{\ln(1-(1-p)u)}{\ln p}\right)\tilde{\hat{G}}_{\beta}(u,s),\ s>0.
\end{equation*}
Thus, 
\begin{equation*}
\tilde{\hat{G}}_{\beta}(u,s)=\frac{s^{\beta-1}}{s^{\beta}+\lambda \left(1-\frac{\ln(1-(1-p)u)}{\ln p}\right)}.
\end{equation*}
On taking inverse Laplace transform and using (\ref{mi}), we get
\begin{equation}\label{pgfftbp3}
\hat{G}_{\beta}(u,t)=	E_{\beta,1}\left(-\lambda \left(1-\frac{\ln(1-(1-p)u)}{\ln p}\right)t^{\beta}\right).
\end{equation}
\begin{remark}
On taking $\beta=1$ in \eqref{pgfftbp3}, we get the pgf of PLP given in (\ref{pgfplp}). Also, from \eqref{pgfftbp3} we can verify that the pmf $\hat{q}_{\beta}(n,t)$ sums up to one, that is, $\sum_{n=0}^{\infty}\hat{q}_{\beta}(n,t)=\hat{G}_{\beta}(u,t)|_{u=1}=E_{\beta,1}(0)=1$.
\end{remark}
The next result gives a time-changed relationship between the PLP and its fractional variant, FPLP.
\begin{theorem}\label{thm4.13}
Let $\{Y_{\beta}(t)\}_{t\ge0}$, $0<\beta<1$, be an inverse stable subordinator independent of the PLP $\{\hat{\mathcal{M}}(t)\}_{t\ge0}$. Then
\begin{equation}\label{ybt3}
\hat{\mathcal{M}}_{\beta}(t)\stackrel{d}{=}\hat{\mathcal{M}}(Y_{\beta}(t)),\ t\ge0.
\end{equation}
\end{theorem}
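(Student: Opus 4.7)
The plan is to mimic the proof of Theorem \ref{thm4.1} verbatim, since the only ingredient that changed from BTP to PLP is the explicit form of the pgf. Specifically, I will compute the pgf of the time-changed process $\hat{\mathcal{M}}(Y_{\beta}(t))$ by conditioning on $Y_{\beta}(t)$, and then show it coincides with the pgf $\hat{G}_{\beta}(u,t)$ derived in \eqref{pgfftbp3}. Since the pgf uniquely determines the one-dimensional distribution of a non-negative integer-valued random variable, equality in distribution follows.

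Let $h_{\beta}(x,t)$ denote the density of $Y_{\beta}(t)$. By conditioning on $Y_{\beta}(t)=x$ and using independence of the PLP from $\{Y_{\beta}(t)\}_{t\ge 0}$, I would write
\begin{equation*}
\mathbb{E}\left(u^{\hat{\mathcal{M}}(Y_{\beta}(t))}\right)=\int_{0}^{\infty}\hat{G}(u,x)h_{\beta}(x,t)\,\mathrm{d}x.
\end{equation*}
Substituting the explicit PLP pgf from \eqref{pgfplp}, the inner integrand becomes $\exp\bigl(-\lambda x\bigl(1-\ln(1-(1-p)u)/\ln p\bigr)\bigr)$, so the integral reduces to the Laplace transform of $h_{\beta}(\cdot,t)$ evaluated at the point $s=\lambda(1-\ln(1-(1-p)u)/\ln p)$. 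Applying \eqref{ybx}, this equals $E_{\beta,1}\!\left(-\lambda\bigl(1-\ln(1-(1-p)u)/\ln p\bigr)t^{\beta}\right)$, which matches the closed-form pgf of FPLP given in \eqref{pgfftbp3}. This settles the equality of pgfs and hence of distributions.

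I expect no real obstacle: the argument is structurally identical to the FBTP case, the only subtlety being that the coefficient $\lambda(1-\ln(1-(1-p)u)/\ln p)$ needs to be non-negative for all $|u|\le 1$ so that the Laplace transform identity \eqref{ybx} applies directly; this holds because $1-(1-p)u\le 1$ implies $\ln(1-(1-p)u)\le 0$, while $\ln p<0$, so the ratio is $\ge 0$ and the bracket is $\ge 1$. Everything else is a mechanical substitution, and the independence assumption on $Y_{\beta}$ is what permits the conditioning step.
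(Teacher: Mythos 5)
Your proof is correct and is exactly the argument the paper intends: the paper simply states that Theorem \ref{thm4.13} follows along the same lines as Theorem \ref{thm4.1}, which is the conditioning-on-$Y_{\beta}(t)$ computation you carried out, matching the resulting pgf with \eqref{pgfftbp3} via \eqref{ybx}. Your added check that $\lambda\left(1-\ln(1-(1-p)u)/\ln p\right)\ge 0$ for $|u|\le 1$ is a nice touch but not a departure from the paper's route.
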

The proof of Theorem \ref{thm4.13} follows similar lines to that of Theorem \ref{thm4.1}.
\begin{remark}
In view of Remark \ref{remark3.5}, we have
\begin{equation}\label{keyyekk31}
\hat{\mathcal{M}}_{\beta}(t)\overset{d}{=}\hat{\mathcal{M}}(T_{2\beta}(t)),\ t>0,
\end{equation}	
where $\{T_{2\beta}(t)\}_{t>0}$ is independent of $\{\hat{\mathcal{M}}(t)\}_{t>0}$.
\end{remark}

\begin{remark}
In view of \eqref{plp1} and \eqref{ybt3}, we note that the FPLP is equal in distribution to the following compound fractional Poisson process:
\begin{equation}\label{compoundr3}
\hat{\mathcal{M}}_{\beta}(t)\stackrel{d}{=}\sum_{i=1}^{N_{\beta}(t)}X_{i},\ t\ge0,
\end{equation}
where $\{N_{\beta}(t)\}_{t\ge0}$ is a TFPP with intensity $\lambda$ independent of the sequence of iid random variables $\{X_{i}\}_{i\ge1}$. Therefore, it is neither Markovian nor a L\'evy process. In view of \eqref{compoundr3}, the System \eqref{fbtpgov3} can alternatively be obtained using Proposition 1 of Beghin and Macci (2014).
\end{remark}
	
\begin{theorem}\label{thm4.23}
The pmf $\hat{q}_{\beta}(n,t)=\mathrm{Pr}\{\hat{\mathcal{M}}_{\beta}(t)=n\}$ of FPLP is given by
\begin{equation}\label{pmf}
\hat{q}_{\beta}(n,t)=\begin{cases*}
E_{\beta, 1}\left(-\lambda t^{\beta}\right),\ n=0,\\
\displaystyle\sum_{\Omega_{n}}\prod_{j=1}^{n}\frac{\left((1-p)^{j}/j\right)^{x_{j}}}{x_{j}!}z_{n}!\left(\frac{-\lambda t^{\beta}}{\ln p}\right)^{z_{n}}E_{\beta, z_{n}\beta +1}^{z_{n}+1}\left(-\lambda t^{\beta}\right),\ n\ge1,
\end{cases*}
\end{equation}
where  $z_{n}=x_{1}+x_{2}+\dots+x_{n}$ and $\Omega_{n}$ is given in \eqref{onn}.
\end{theorem}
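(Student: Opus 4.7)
The plan is to mimic exactly the argument used for Theorem \ref{thm4.21}, replacing the BTP ingredients by their PLP counterparts. The key inputs are already in place: the time-change representation \eqref{keyyekk31} of FPLP as $\hat{\mathcal{M}}(T_{2\beta}(t))$, the folded density $\bar{u}_{2\beta}(x,t)$ from \eqref{u2b}--\eqref{ub}, the explicit pmf \eqref{p(n,t)13} of PLP, and the integral representation of $E^{z_n+1}_{\beta,z_n\beta+1}$ from Beghin and Orsingher (2010), Eq. (2.13).

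First I would condition on $T_{2\beta}(t)$ and write
\begin{equation*}
\hat{q}_{\beta}(n,t)=\int_{0}^{\infty}\hat{q}(n,x)\,\bar{u}_{2\beta}(x,t)\,\mathrm{d}x.
\end{equation*}
For $n=0$, only the factor $e^{-\lambda x}$ survives in $\hat{q}(0,x)$, and combining with \eqref{u2b} this integral equals $\mathbb{E}(e^{-\lambda Y_{\beta}(t)})=E_{\beta,1}(-\lambda t^{\beta})$ by \eqref{ybx}, giving the first branch of \eqref{pmf}. For $n\ge1$, I would substitute \eqref{p(n,t)13} and \eqref{u2b}, pull the sum over $\Omega_n$ and the product $\prod_{j=1}^n ((1-p)^j/j)^{x_j}/x_j!$ outside the integral together with the constant $(-\lambda/\ln p)^{z_n}$, leaving the kernel
\begin{equation*}
t^{-\beta}\int_{0}^{\infty}e^{-\lambda x}x^{z_n}W_{-\beta,1-\beta}\!\left(-\frac{x}{t^{\beta}}\right)\mathrm{d}x.
\end{equation*}

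The second step is the change of variable $y=\lambda x$, which turns this kernel into
\begin{equation*}
\frac{t^{-\beta}}{\lambda^{z_n+1}}\int_{0}^{\infty}e^{-y}y^{z_n}W_{-\beta,1-\beta}\!\left(-\frac{y}{\lambda t^{\beta}}\right)\mathrm{d}y,
\end{equation*}
which is exactly the form appearing in the Beghin--Orsingher identity; applying it (with the intensity $\lambda$ in place of $\alpha(e^{\theta}-1)$) replaces the integral by $z_n!\,\lambda^{z_n+1}t^{\beta(z_n+1)}E^{z_n+1}_{\beta,z_n\beta+1}(-\lambda t^{\beta})$. Collecting the powers of $\lambda$ and $t^{\beta}$ against the constant $(-\lambda/\ln p)^{z_n}$ pulled out earlier yields precisely $z_n!\,(-\lambda t^{\beta}/\ln p)^{z_n}E^{z_n+1}_{\beta,z_n\beta+1}(-\lambda t^{\beta})$, matching the second branch of \eqref{pmf}.

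There is no real obstacle here: the proof is parallel to that of Theorem \ref{thm4.21} and the only bookkeeping to watch is that the constant prefactor $(-\lambda/\ln p)^{z_n}$ from the PLP pmf, together with the $\lambda^{-(z_n+1)}$ produced by the change of variable and the $t^{\beta(z_n+1)}$ produced by the Mittag-Leffler identity, must combine cleanly to $(-\lambda t^{\beta}/\ln p)^{z_n}$ times an overall $E^{z_n+1}_{\beta,z_n\beta+1}(-\lambda t^{\beta})$; the extra $t^{-\beta}$ outside the integral cancels against one power of $t^{\beta}$ from the identity, and the extra $\lambda$ from the identity cancels against one of the $\lambda$'s absorbed earlier. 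Fubini's theorem justifies the swap of sum and integral since every term in the expansion is nonnegative (after pulling out the sign), which concludes the argument.
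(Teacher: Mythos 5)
Your proposal is correct and is essentially the paper's own proof: the authors likewise write $\hat{q}_{\beta}(n,t)=\int_{0}^{\infty}\hat{q}(n,x)\bar{u}_{2\beta}(x,t)\,\mathrm{d}x$ using \eqref{keyyekk31} and \eqref{ub}, insert the PLP pmf \eqref{p(n,t)13}, and repeat the Theorem \ref{thm4.21} computation with the Beghin--Orsingher identity at intensity $\lambda$. Your bookkeeping of the powers of $\lambda$ and $t^{\beta}$ checks out, so nothing further is needed.
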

\begin{proof}
From \eqref{ub} and \eqref{keyyekk31}, we have
\begin{equation*}
\hat{q}_{\beta}(n,t)=\int_{0}^{\infty}\hat{q}(n,x)\bar{u}_{2\beta}(x,t)\mathrm{d}x,
\end{equation*}
where $\hat{q}(n,x)$ is given in \eqref{p(n,t)13}. From this point the proof follows similar lines to that of Theorem \ref{thm4.21}.
\end{proof}

\begin{remark}
If we use \eqref{alterpnt3} in the proof Theorem \ref{thm4.23} then we can obtain the following alternate form of the pmf of FPLP:
\begin{equation}\label{pmf2p}
\hat{q}_{\beta}(n,t)=\sum_{\Omega^{n}_{k}}\prod_{j=1}^{k}\frac{(1-p)^{x_{j}}}{x_{j}}\left(\frac{-\lambda t^{\beta}}{\ln p}\right)^{k} E_{\beta, k\beta+1}^{k+1}\left(-\lambda t^{\beta}\right),
\end{equation}	
where $\Omega^{n}_{k}$ is given in \eqref{okn}.
\end{remark}
From (\ref{compoundr3}), we have
\begin{equation*}
\hat{q}_{\beta}(0,t)=\mathrm{Pr}\{N_{\beta}(t)=0\}=E_{\beta,1}(-\lambda t^{\beta}).
\end{equation*}
For $n\ge1$, we get
\begin{align}\label{rdee323}
\hat{q}_{\beta}(n,t)&=\sum_{k=1}^{n}\mathrm{Pr}\{X_{1}+X_{2}+\dots+X_{k}=n\}\mathrm{Pr}\{N_{\beta}(t)=k\}\\
&=\sum_{k=1}^{n}\sum_{\Theta_{n}^{k}}k!\prod_{j=1}^{n}\frac{((1-p)^{j}/j)^{x_{j}}}{x_{j}!}\left(\frac{-\lambda t^{\beta}}{\ln p}\right)^{k}E_{\beta,k\beta+1}^{k+1}(-\lambda t^{\beta}),\label{tkn3}
\end{align}
where $\Theta_{n}^{k}$ is given in (\ref{tknd}).
	
As $X_{i}$'s are iid, we have
\begin{align}\label{sweee323}
\mathrm{Pr}\{X_{1}+X_{2}+\dots+X_{k}=n\}
&=\underset{m_j\in\mathbb{N}}{\underset{m_{1}+m_{2}+\dots+m_{k}=n}{\sum}}\prod_{j=1}^{k}\mathrm{Pr}\{X_{j}=m_j\}\nonumber\\
&=\underset{m_j\in\mathbb{N}}{\underset{m_{1}+m_{2}+\dots+m_{k}=n}{\sum}}\left(\frac{-1}{\ln p}\right)^{k}\prod_{j=1}^{k}\frac{(1-p)^{m_{j}}}{m_{j}
},	
\end{align}
where we have used (\ref{plp1}). Substituting (\ref{sweee323}) in (\ref{rdee323}), we get an equivalent expression for the pmf of the FPLP in the following form:
\begin{equation}\label{pmf4p}
\hat{q}_{\beta}(n,t)=\sum_{k=1}^{n}\underset{m_j\in\mathbb{N}}{\underset{m_{1}+m_{2}+\dots+m_{k}=n}{\sum}}\prod_{j=1}^{k}\frac{(1-p)^{m_{j}}}{m_{j}!}\left(\frac{-\lambda t^{\beta}}{\ln p}\right)^{k}E_{\beta,k\beta+1}^{k+1}(-\lambda t^{\beta}).	
\end{equation}
 
The pmf \eqref{tkn3} can be written in the following equivalent form by using Lemma 2.4 of Kataria and Vellaisamy (2017b): 
\begin{equation}\label{pm42p}
\hat{q}_{\beta}(n,t)=\sum_{k=1}^{n}\sum_{\Lambda_{n}^{k}}k!\prod_{j=1}^{n-k+1}\frac{((1-p)^{j}/j)^{x_{j}}}{x_{j}!}\left(\frac{-\lambda t^{\beta}}{\ln p}\right)^{k}E_{\beta,k\beta+1}^{k+1}(-\lambda t^{\beta}),
\end{equation}
where $\Lambda_{n}^{k}$ is given in \eqref{Lkn}.
	
Thus, we have obtained the five alternate forms of the pmf of FPLP given in (\ref{pmf}), (\ref{pmf2p}), \eqref{tkn3}, \eqref{pmf4p} and \eqref{pm42p}.
\begin{remark}
The distribution of the first waiting time $\hat{\mathcal{W}}_{1}$ of FPLP is given by
\begin{equation*}
\mathrm{Pr}\{\hat{\mathcal{W}}_{1}>t\}=\mathrm{Pr}\{\hat{\mathcal{M}}_{\beta}(t)=0\}=E_{\beta, 1}\left(-\lambda t^{\beta}\right).
\end{equation*}
By using the same arguments as used in Remark \ref{remark3.9}, we conclude that the FPLP is not a renewal process. 
\end{remark}
By using \eqref{meanplp} and Theorem 2.1 of Leonenko {\it et al.} (2014),  we obtain the mean, variance and  covariance of FPLP as follows: 
\begin{align*}
\mathbb{E}\left(\hat{\mathcal{M}}_{\beta}(t)\right)&=\hat{r}_{1}\mathbb{E}\left(Y_{\beta}(t)\right),
\\
\operatorname{Var}\left(\hat{\mathcal{M}}_{\beta}(t)\right)&=\hat{r}_{2}\mathbb{E}\left(Y_{\beta}(t)\right)+\hat{r}_{1}^{2}\operatorname{Var}\left(Y_{\beta}(t)\right),\\	
\operatorname{Cov}\left(\hat{\mathcal{M}}_{\beta}(s),\hat{\mathcal{M}}_{\beta}(t)\right)&=\hat{r}_{2}\mathbb{E}\left(Y_{\beta}(\min\{s,t\})\right)+\hat{r}_{1}^{2}\operatorname{Cov}\left(Y_{\beta}(s),Y_{\beta}(t)\right).
\end{align*}
The FPLP exhibits overdispersion as $\operatorname{Var}\left(\hat{\mathcal{M}}_{\beta}(t)\right)-\mathbb{E}\left(\hat{\mathcal{M}}_{\beta}(t)\right)>0$ for all $t>0$.
\begin{theorem}\label{thm4.531}
The FPLP exhibits the LRD property.
\end{theorem}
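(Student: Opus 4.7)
The plan is to run the proof of Theorem \ref{thm4.5} essentially verbatim, with the only change being that the constants $\alpha\theta(\theta+1)e^{\theta}$ and $\alpha\theta e^{\theta}$ are replaced by $\hat{r}_{2}$ and $\hat{r}_{1}$, respectively. From the expressions for $\operatorname{Var}(\hat{\mathcal{M}}_{\beta}(t))$ and $\operatorname{Cov}(\hat{\mathcal{M}}_{\beta}(s),\hat{\mathcal{M}}_{\beta}(t))$ obtained just before the theorem, the correlation function admits the closed form
\begin{equation*}
\operatorname{Corr}(\hat{\mathcal{M}}_{\beta}(s),\hat{\mathcal{M}}_{\beta}(t))=\frac{\hat{r}_{2}\mathbb{E}(Y_{\beta}(\min\{s,t\}))+\hat{r}_{1}^{2}\operatorname{Cov}(Y_{\beta}(s),Y_{\beta}(t))}{\sqrt{\operatorname{Var}(\hat{\mathcal{M}}_{\beta}(s))}\sqrt{\hat{r}_{2}\mathbb{E}(Y_{\beta}(t))+\hat{r}_{1}^{2}\operatorname{Var}(Y_{\beta}(t))}}.
\end{equation*}
This reduces the problem to studying the large-$t$ behaviour of four quantities attached to the inverse stable subordinator.

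For fixed $s$, I would fix $t$ large enough so that $\min\{s,t\}=s$; then $\hat{r}_{2}\mathbb{E}(Y_{\beta}(\min\{s,t\}))$ is a positive constant in $t$. Using \eqref{asi1}, the second term of the numerator satisfies $\hat{r}_{1}^{2}\operatorname{Cov}(Y_{\beta}(s),Y_{\beta}(t))\to \hat{r}_{1}^{2}\beta s^{2\beta}B(\beta,\beta+1)/\Gamma^{2}(\beta+1)$ as $t\to\infty$, since the subtracted term in \eqref{asi1} is $O(t^{\beta-1})=o(1)$ for $\beta<1$. Hence the numerator converges to a strictly positive limit depending only on $s$. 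On the denominator side, \eqref{meani} and \eqref{xswe331} give $\hat{r}_{2}\mathbb{E}(Y_{\beta}(t))=O(t^{\beta})$ and $\hat{r}_{1}^{2}\operatorname{Var}(Y_{\beta}(t))=O(t^{2\beta})$, so the $t$-dependent square root in the denominator behaves like $\hat{r}_{1}\sqrt{2/\Gamma(2\beta+1)-1/\Gamma^{2}(\beta+1)}\,t^{\beta}$, while $\sqrt{\operatorname{Var}(\hat{\mathcal{M}}_{\beta}(s))}$ is a positive constant.

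Combining these asymptotics yields
\begin{equation*}
\operatorname{Corr}(\hat{\mathcal{M}}_{\beta}(s),\hat{\mathcal{M}}_{\beta}(t))\sim \hat{c}_{0}(s)\,t^{-\beta},\qquad t\to\infty,
\end{equation*}
for an explicit constant
\begin{equation*}
\hat{c}_{0}(s)=\frac{\hat{r}_{2}\mathbb{E}(Y_{\beta}(s))+\hat{r}_{1}^{2}\beta s^{2\beta}B(\beta,\beta+1)/\Gamma^{2}(\beta+1)}{\hat{r}_{1}\sqrt{\operatorname{Var}(\hat{\mathcal{M}}_{\beta}(s))}\sqrt{2/\Gamma(2\beta+1)-1/\Gamma^{2}(\beta+1)}}>0,
\end{equation*}
and since $0<\beta<1$ the FPLP exhibits the LRD property by Definition~2.1.

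There is no real obstacle here: the delicate piece is the asymptotic for $\operatorname{Cov}(Y_{\beta}(s),Y_{\beta}(t))$, but this is supplied by \eqref{asi1}, and everything else is bookkeeping. One should only take mild care to note that the constant $\hat{c}_{0}(s)$ is positive, which is automatic because $\hat{r}_{1},\hat{r}_{2}>0$ (as $p\in(0,1)$ forces $\ln p<0$ and $p-1<0$) and $2/\Gamma(2\beta+1)-1/\Gamma^{2}(\beta+1)>0$ for $\beta\in(0,1)$.
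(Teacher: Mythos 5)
Your proposal is correct and coincides with the paper's own argument: the paper simply states that the proof of Theorem \ref{thm4.531} follows the same lines as Theorem \ref{thm4.5}, which is precisely the substitution of $\hat{r}_{1}$, $\hat{r}_{2}$ for $\alpha\theta e^{\theta}$, $\alpha\theta(\theta+1)e^{\theta}$ that you carry out, yielding $\operatorname{Corr}(\hat{\mathcal{M}}_{\beta}(s),\hat{\mathcal{M}}_{\beta}(t))\sim \hat{c}_{0}(s)t^{-\beta}$ with $0<\beta<1$. Your asymptotics via \eqref{meani}, \eqref{xswe331} and \eqref{asi1}, and the positivity check on $\hat{c}_{0}(s)$, are all in order.
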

The proof of Theorem \ref{thm4.531}  follows similar lines to that of Theorem \ref{thm4.5}.
\begin{remark}
As in Remark \ref{srd}, for a fixed $h>0$, the increment process $\hat{\mathcal{Z}}_{\beta}^{h}(t)\coloneqq \hat{\mathcal{M}}_{\beta}(t+h)-\hat{\mathcal{M}}_{\beta}(t)$, $t\ge0$ of FPLP
exhibits the SRD property.		
\end{remark}
\begin{proposition}\label{prop}
The one-dimensional distributions of FPLP are not infinitely divisible.
\end{proposition}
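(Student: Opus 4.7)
The plan is to mirror the argument just given for the FBTP (the preceding proposition), which itself follows the template of Proposition 3 of Kataria and Khandakar (2022). The strategy is proof by contradiction: assume the one-dimensional distributions of FPLP are infinitely divisible, then use the time-changed representation \eqref{ybt3} together with the law of large numbers \eqref{limit plp} to extract a large-$t$ limit that would have to be infinitely divisible as well, but which turns out to be a scaled Mittag-Leffler variable—known not to be infinitely divisible for $\beta\in(0,1)$.

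Concretely, I would start from \eqref{ybt3} and invoke the self-similarity of the inverse stable subordinator, $Y_{\beta}(t)\stackrel{d}{=}t^{\beta}Y_{\beta}(1)$ (a standard consequence of the self-similarity of the stable subordinator), to write
\begin{equation*}
\frac{\hat{\mathcal{M}}_{\beta}(t)}{t^{\beta}}\stackrel{d}{=}\frac{\hat{\mathcal{M}}\!\left(t^{\beta}Y_{\beta}(1)\right)}{t^{\beta}},\qquad t>0,
\end{equation*}
with the PLP $\{\hat{\mathcal{M}}(s)\}_{s\ge 0}$ taken independent of $Y_{\beta}(1)$. Conditioning on $Y_{\beta}(1)=y>0$ and applying \eqref{limit plp} gives $\hat{\mathcal{M}}(t^{\beta}y)/t^{\beta}\to \hat{r}_{1}y$ in probability as $t\to\infty$, and the a.s.\ positivity of $Y_{\beta}(1)$ upgrades this (via dominated convergence on the Laplace transform, using \eqref{ybx}) to the unconditional weak limit
\begin{equation*}
\frac{\hat{\mathcal{M}}_{\beta}(t)}{t^{\beta}}\Longrightarrow \hat{r}_{1}\,Y_{\beta}(1),\qquad t\to\infty,
\end{equation*}
where $\hat{r}_{1}=\lambda(p-1)/(p\ln p)\neq 0$.

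Since the class of infinitely divisible distributions is closed under scalar multiplication and weak convergence, the standing assumption would force $\hat{r}_{1}\,Y_{\beta}(1)$—equivalently $Y_{\beta}(1)$ itself—to be infinitely divisible. This contradicts the well-known fact that the Mittag-Leffler law of $Y_{\beta}(1)$ is not infinitely divisible for $\beta\in(0,1)$, which is the same ingredient used in Proposition 3 of Kataria and Khandakar (2022) and invoked for the FBTP above, completing the argument.

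The only delicate step is the passage to the unconditional weak limit; everything else (self-similarity, scaling of ID, stability of ID under weak convergence, and the LLN \eqref{limit plp}) is routine bookkeeping once this piece is in place.
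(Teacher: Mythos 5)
Your proposal is correct and follows essentially the same route as the paper, which proves Proposition \ref{prop} by combining the limit \eqref{limit plp}, the self-similarity of the inverse stable subordinator in the time-change \eqref{ybt3}, and the contradiction argument of Proposition 3 of Kataria and Khandakar (2022) based on the non-infinite-divisibility of $Y_{\beta}(1)$. The only difference is that you spell out the passage to the unconditional weak limit, which the paper leaves implicit.
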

The proof of Proposition \ref{prop} follows on using \eqref{limit plp}, the self-similarity property of $\{Y_{\beta}(t)\}_{t\ge0}$ in \eqref{ybt3} and the arguments used in Proposition 3 of Kataria and Khandakar (2022).

\section{Generalized P\'olya-Aeppli Process and its fractional version}
Here, we introduce a fractional version of the GPAP. First, we give some additional properties of it.

On taking $\beta=1$, $\lambda_{j}=\displaystyle\lambda\binom{r+j-1}{j}\rho^{j}(1-\rho)^{r}/(1-(1-\rho)^{r})$ for all $j\ge 1$ and letting $k\to \infty$, the governing system of differential equations (\ref{cre}) for GCP reduces to the governing system of differential equations  (\ref{btpgov33}) of GPAP. Thus, the GPAP is a limiting case of the GCP.	
	
The next result gives a recurrence relation for the pmf of GPAP whose proof follows from Proposition 1 of Kataria and Khandakar (2022).
\begin{proposition}
The  state probabilities $\bar{q}(n,t)=\mathrm{Pr}\{\bar{\mathcal{M}}(t)=n\}$, $n\ge1$ of GPAP satisfy
\begin{equation*}
\bar{q}(n,t)=\frac{\lambda t(1-\rho)^{r}}{n(1-(1-\rho)^{r})}\sum_{j=1}^{n}j\rho^{j}\binom{r+j-1}{j}\bar{q}(n-j,t).	
\end{equation*}
\end{proposition}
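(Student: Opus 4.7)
The plan is to invoke Proposition 1 of Kataria and Khandakar (2022), which gives the general recurrence for the state probabilities $p(n,t)$ of the GCP, namely
\begin{equation*}
p(n,t) \;=\; \frac{t}{n}\sum_{j=1}^{\min\{n,k\}} j\,\lambda_{j}\, p(n-j,t),\qquad n\ge 1,
\end{equation*}
and then specialize this identity to the parameter choice already singled out at the beginning of this section: $\lambda_{j}=\lambda\binom{r+j-1}{j}\rho^{j}(1-\rho)^{r}/(1-(1-\rho)^{r})$ for all $j\ge 1$, together with $k\to\infty$. This is the same strategy used for the analogous recurrences for the BTP and PLP above, and the computation is completely parallel.

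Concretely, I would first note that for any fixed $n\ge 1$ the recurrence only involves indices $j=1,2,\dots,n$, so one may take $k\ge n$ and the cap $\min\{n,k\}$ simply becomes $n$; no limit interchange is needed. Substituting the prescribed $\lambda_{j}$ gives
\begin{equation*}
\bar{q}(n,t) \;=\; \frac{t}{n}\sum_{j=1}^{n} j\cdot\frac{\lambda\binom{r+j-1}{j}\rho^{j}(1-\rho)^{r}}{1-(1-\rho)^{r}}\,\bar{q}(n-j,t),
\end{equation*}
and pulling the $j$-independent constants $\lambda(1-\rho)^{r}/(1-(1-\rho)^{r})$ outside the sum reproduces exactly the asserted identity.

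The only genuine check is that the reduction of the GCP to the GPAP under these parameters is legitimate, which has already been established in the opening paragraph of this section (the governing system \eqref{cre} reduces to \eqref{btpgov33} under that substitution). Since the series $\sum_{j\ge 1}\binom{r+j-1}{j}\rho^{j}=(1-\rho)^{-r}-1<\infty$ for $0<\rho<1$, the compound Poisson structure is well defined and all steps are rigorous. There is no real obstacle; the argument is essentially a one-line substitution into the cited general recurrence.
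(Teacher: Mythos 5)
Your proposal is correct and follows the same route as the paper, which likewise obtains the recurrence by specializing Proposition 1 of Kataria and Khandakar (2022) to $\lambda_{j}=\lambda\binom{r+j-1}{j}\rho^{j}(1-\rho)^{r}/(1-(1-\rho)^{r})$ with $k\to\infty$, using the identification of the GPAP as a limiting case of the GCP established at the start of the section. Your additional observation that the recurrence for fixed $n$ only involves $j\le n$, so the passage to $k\to\infty$ is harmless, is a correct and welcome clarification.
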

	
The pgf \eqref{pgfplp3} can be expressed as
\begin{equation*}
\bar{G}(u,t)=\prod_{j=1}^{\infty}\exp \left(\lambda t\frac{\rho^{j}(1-\rho)^{r}}{1-(1-\rho)^{r}}\binom{r+j-1}{j}(u^{j}-1)\right).
\end{equation*}
It follows that the GPAP is equal in distribution to a weighted sum of independent Poisson processes, that is,
\begin{equation}\label{relat}
\bar{\mathcal{M}}(t)\stackrel{d}{=}\sum_{j=1}^{\infty}jN_{j}(t),
\end{equation}	
where $\{N_{j}(t)\}_{t\ge0}$ is a Poisson process with intensity $\lambda_{j}=\displaystyle\lambda\binom{r+j-1}{j}\rho^{j}(1-\rho)^{r}/(1-(1-\rho)^{r})$. Thus, 
\begin{align}\label{limit}
\lim_{t\to\infty}\frac{\bar{\mathcal{M}}(t)}{t}&\stackrel{d}{=}\sum_{j=1}^{\infty}j\lim_{t\to\infty}\frac{N_{j}(t)}{t}\nonumber\\
&=\frac{\lambda r\rho}{(1-\rho)(1-(1-\rho)^{r})},\ \text{in probability},
\end{align}
where we have used $\lim_{t\to\infty}N_{j}(t)/t=\displaystyle\lambda\binom{r+j-1}{j}\rho^{j}(1-\rho)^{r}/(1-(1-\rho)^{r})$, $j\ge1$ a.s. On substituting $r=1$ in \eqref{limit}, we get the corresponding limiting result for the P\'olya-Aeppli process (see Kataria and Khandakar (2022), Section 4.3).
	
In view of \eqref{relat}, the pmf of GPAP can be obtained from \eqref{janina} and it is given by
\begin{equation}\label{p(n,t)131}
\bar{q}(n,t)=\sum_{\Omega_{n}}\prod_{j=1}^{n}\frac{\left(\rho^{j}\binom{r+j-1}{j}\right)^{x_{j}}}{x_{j}!}\left(\frac{\lambda t (1-\rho)^{r}}{1-(1-\rho)^{r}}\right)^{z_{n}}e^{-\lambda t}, 
\end{equation}
where $z_{n}=x_{1}+x_{2}+\dots+x_{n}$ and $\Omega_{n}$ is given in \eqref{onn}. 
	
Using \eqref{manina}, the pmf of GPAP can alternatively be written as 
\begin{equation}\label{alterpnt31}
\bar{q}(n,t)=\sum_{\Omega^{n}_{k}}\prod_{j=1}^{k}\rho^{x_{j}}\binom{r+x_{j}-1}{x_{j}}\left(\frac{\lambda t(1-\rho)^{r}}{1-(1-\rho)^{r}}\right)^{k}\frac{e^{-\lambda t}}{k!},
\end{equation}
where $\Omega^{n}_{k}$ is given in \eqref{okn}.
	
From \eqref{gpap1}, it follows that the GPAP is a L\'evy process.
Its L\'evy measure can be obtained by taking $\lambda_{j}=\displaystyle\lambda\binom{r+j-1}{j}\rho^{j}(1-\rho)^{r}/(1-(1-\rho)^{r})$ for all $j\ge 1$ and letting $k\to \infty$ in Eq. (13) of Kataria and Khandakar (2022). It is given by
\begin{equation*}
\bar{\mu}(\mathrm{d}x)=\frac{\lambda(1-\rho)^{r}}{1-(1-\rho)^{r}}\sum_{j=1}^{\infty}\binom{r+j-1}{j}\rho^{j}\delta_{j}\mathrm{d}x.
\end{equation*}
\subsection{Fractional generalized P\'olya-Aeppli process}
Here, we introduce a fractional version of the GPAP, namely, the fractional generalized P\'olya-Aeppli process (FGPAP). We define it as the stochastic process  $\{\bar{\mathcal{M}}_{\beta}(t)\}_{t\ge0}$, $0<\beta\le 1$,  whose state probabilities  $\bar{q}_{\beta}(n,t)=\mathrm{Pr}\{\bar{\mathcal{M}}_{\beta}(t)=n\}$ satisfy the following system of differential equations: 
\begin{equation}\label{fbtpgov31}
\begin{aligned}
\frac{\mathrm{d}^{\beta}}{\mathrm{d}t^{\beta}}\bar{q}_{\beta}(0,t)&=-\lambda \bar{q}_{\beta}(0,t),\\
\frac{\mathrm{d}^{\beta}}{\mathrm{d}t^{\beta}}\bar{q}_{\beta}(n,t)&=-\lambda \bar{q}_{\beta}(n,t)+\frac{\lambda}{(1-\rho)^{-r}-1}\sum_{j=1}^{n}\binom{r+j-1}{j}\rho^{j}\bar{q}_{\beta}(n-j,t),\ n\ge 1,
\end{aligned}
\end{equation}
with initial conditions $\bar{q}_{\beta}(0,0)=1$ and $\bar{q}_{\beta}(n,0)=0$, $n\ge1$.
	
Note that the system of equations (\ref{fbtpgov31}) is obtained by replacing the integer order derivative in (\ref{btpgov33}) by Caputo fractional derivative.
	
\begin{remark}
On taking $\lambda_{j}=\displaystyle\lambda\binom{r+j-1}{j}\rho^{j}(1-\rho)^{r}/(1-(1-\rho)^{r})$ for all $j\ge 1$ and letting $k\to \infty$, the System (\ref{cre}) reduces to the System (\ref{fbtpgov31}). Thus, the FGPAP is a limiting case of the GFCP.
		
Also, on taking $r=1$, the System \eqref{fbtpgov31} reduces to the system of differential equations that governs the state probabilities of fractional P\'olya-Aeppli process (see Beghin and Macci (2014), Eq. (19)).
\end{remark}
Using \eqref{fbtpgov31}, it can be shown that the pgf $\bar{G}_{\beta}(u,t)=\mathbb{E}\left(u^{\bar{\mathcal{M}}_{\beta}(t)}\right)$, $|u|\leq1$ of FGPAP satisfies
\begin{equation*}
\frac{\mathrm{d}^{\beta}}{\mathrm{d}t^{\beta}}\bar{G}_{\beta}(u,t)=-\lambda \left(1-\frac{(1-\rho u)^{-r}-1}{(1-\rho)^{-r}-1}\right)\bar{G}_{\beta}(u,t),
\end{equation*}
with $\bar{G}_{\beta}(u,0)=1$. On taking the Laplace transform in the above equation and using \eqref{lc}, we get
\begin{equation*}
s^{\beta}\tilde{\bar{G}}_{\beta}(u,s)-s^{\beta-1}\bar{G}_{\beta}(u,0)=-\lambda \left(1-\frac{(1-\rho u)^{-r}-1}{(1-\rho)^{-r}-1}\right)\tilde{\bar{G}}_{\beta}(u,s),\ s>0.
\end{equation*}
Thus, 
\begin{equation*}
\tilde{\bar{G}}_{\beta}(u,s)=\frac{s^{\beta-1}}{s^{\beta}+\lambda \left(1-\frac{(1-\rho u)^{-r}-1}{(1-\rho)^{-r}-1}\right)}.
\end{equation*}
On taking inverse Laplace transform and using (\ref{mi}), we get
\begin{equation}\label{pgfftbp31}
\bar{G}_{\beta}(u,t)=	E_{\beta,1}\left(-\lambda \left(1-\frac{(1-\rho u)^{-r}-1}{(1-\rho)^{-r}-1}\right)t^{\beta}\right).
\end{equation}
\begin{remark}
On taking $\beta=1$ in \eqref{pgfftbp31}, we get the pgf of GPAP given in (\ref{pgfplp3}). Also, from \eqref{pgfftbp31} we can verify that the pmf $\bar{q}_{\beta}(n,t)$ sums up to one, that is, $\sum_{n=0}^{\infty}\bar{q}_{\beta}(n,t)=\bar{G}_{\beta}(u,t)|_{u=1}=E_{\beta,1}(0)=1$.
\end{remark}
The next result gives a time-changed relationship between the GPAP and its fractional version, FGPAP.
\begin{theorem}\label{thm4.131}
Let $\{Y_{\beta}(t)\}_{t\ge0}$, $0<\beta<1$, be an inverse stable subordinator independent of the GPAP $\{\bar{\mathcal{M}}(t)\}_{t\ge0}$. Then
\begin{equation}\label{ybt31}
\bar{\mathcal{M}}_{\beta}(t)\stackrel{d}{=}\bar{\mathcal{M}}(Y_{\beta}(t)),\ t\ge0.
\end{equation}
\end{theorem}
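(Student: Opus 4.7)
The plan is to mimic exactly the proof of Theorem \ref{thm4.1}, replacing the pgf of the BTP by that of the GPAP throughout. Since $\{\bar{\mathcal{M}}(t)\}_{t\ge 0}$ is independent of $\{Y_{\beta}(t)\}_{t\ge 0}$, I can condition on $Y_{\beta}(t)=x$ and write, denoting by $h_{\beta}(x,t)$ the density of the inverse stable subordinator,
\begin{equation*}
\mathbb{E}\left(u^{\bar{\mathcal{M}}(Y_{\beta}(t))}\right)=\int_{0}^{\infty}\bar{G}(u,x)h_{\beta}(x,t)\,\mathrm{d}x.
\end{equation*}
By the pgf formula \eqref{pgfplp3} for the GPAP, $\bar{G}(u,x)=\exp\!\left(-\lambda x\left(1-\frac{(1-\rho u)^{-r}-1}{(1-\rho)^{-r}-1}\right)\right)$, so the integral becomes an exponential transform of $Y_{\beta}(t)$ with parameter $s=\lambda\bigl(1-\frac{(1-\rho u)^{-r}-1}{(1-\rho)^{-r}-1}\bigr)$.

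The next step is to apply the Mittag-Leffler representation \eqref{ybx} of $\mathbb{E}(e^{-sY_{\beta}(t)})$ with this choice of $s$. This yields
\begin{equation*}
\mathbb{E}\left(u^{\bar{\mathcal{M}}(Y_{\beta}(t))}\right)=E_{\beta,1}\!\left(-\lambda\left(1-\frac{(1-\rho u)^{-r}-1}{(1-\rho)^{-r}-1}\right)t^{\beta}\right),
\end{equation*}
which is precisely the pgf $\bar{G}_{\beta}(u,t)$ of the FGPAP obtained in \eqref{pgfftbp31}. Since the pgf characterizes the distribution of a non-negative integer-valued random variable on $|u|\le 1$, the equality \eqref{ybt31} follows.

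The only delicate point, and the step I would be most careful about, is verifying that \eqref{ybx} is applicable, i.e.\ that $s\ge 0$ so the exponential is genuinely bounded and the Laplace transform identity is valid. For $|u|\le 1$ and $0<\rho<1$, $r>0$, we have $(1-\rho u)^{-r}\le (1-\rho)^{-r}$, hence $\frac{(1-\rho u)^{-r}-1}{(1-\rho)^{-r}-1}\le 1$ and $s\ge 0$; for $u=1$ both sides reduce to $1$ and the identity becomes the tautology $E_{\beta,1}(0)=1$. Everything else is a routine transcription of the argument given for Theorem \ref{thm4.1}, with the BTP pgf $\exp(-\alpha(e^{\theta}-e^{\theta u})x)$ replaced by $\bar{G}(u,x)$, so no new obstacle arises. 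As the author states, the proof follows similar lines to that of Theorem \ref{thm4.1}.
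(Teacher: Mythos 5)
Your proposal is correct and is essentially the paper's own argument: the authors prove Theorem \ref{thm4.1} by conditioning on the density of $Y_{\beta}(t)$, identifying the resulting integral with $E_{\beta,1}$ via \eqref{ybx}, and matching it with the pgf of the fractional process, and they state that Theorem \ref{thm4.131} follows the same lines with $\bar{G}(u,x)$ from \eqref{pgfplp3} in place of the BTP pgf, which is exactly what you do. Your extra check that the Mittag-Leffler/Laplace identity applies (i.e.\ that the exponent is nonnegative for $|u|\le 1$) is a sound, if unstated in the paper, refinement.
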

The proof of Theorem \ref{thm4.131} follows similar lines to that of Theorem \ref{thm4.1}.
\begin{remark}
In view of Remark \ref{remark3.5}, we have
\begin{equation}\label{keyyekk311}
\bar{\mathcal{M}}_{\beta}(t)\overset{d}{=}\bar{\mathcal{M}}(T_{2\beta}(t)),\ t>0,
\end{equation}	
where $\{T_{2\beta}(t)\}_{t>0}$ is independent of $\{\bar{\mathcal{M}}(t)\}_{t>0}$.
\end{remark}
\begin{remark}
In view of \eqref{gpap1} and \eqref{ybt31}, we note that the FGPAP is equal in distribution to the following compound fractional Poisson process:
\begin{equation}\label{compoundr31}
\bar{\mathcal{M}}_{\beta}(t)\stackrel{d}{=}\sum_{i=1}^{N_{\beta}(t)}X_{i},\ t\ge0,
\end{equation}
where $\{N_{\beta}(t)\}_{t\ge0}$ is a TFPP with intensity $\lambda$ independent of $\{X_{i}\}_{i\ge1}$. Therefore, it is neither Markovian nor a L\'evy process. In view of \eqref{compoundr31}, the System \eqref{fbtpgov31} can alternatively be obtained using Proposition 1 of Beghin and Macci (2014).
\end{remark}
	
\begin{theorem}\label{thm4.231}
The pmf $\bar{q}_{\beta}(n,t)=\mathrm{Pr}\{\bar{\mathcal{M}}_{\beta}(t)=n\}$ of FGPAP is given by
\begin{equation}\label{pmf11}
\bar{q}_{\beta}(n,t)=\begin{cases*}
E_{\beta, 1}\left(-\lambda t^{\beta}\right),\ n=0,\\
\displaystyle\rho^{n}\sum_{j=1}^{n}\sum_{m=1}^{j}(-1)^{m}\binom{j}{m}\left(\frac{-\lambda t^{\beta }}{(1-\rho)^{-r}-1}\right)^{j}\binom{rm+n-1}{n}E_{\beta, j\beta +1}^{j+1}\left(-\lambda t^{\beta}\right),\ n\ge1.
\end{cases*}
\end{equation}
\end{theorem}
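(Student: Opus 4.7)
The plan is to follow the same subordination template used in Theorems \ref{thm4.21} and \ref{thm4.23}. Using the time-changed representation \eqref{keyyekk311} together with the fact (from Remark \ref{remark3.5}) that the density of $T_{2\beta}(t)$ coincides with $\bar{u}_{2\beta}(\cdot,t)$, I write
\begin{equation*}
\bar{q}_{\beta}(n,t)=\int_{0}^{\infty}\bar{q}(n,x)\,\bar{u}_{2\beta}(x,t)\,\mathrm{d}x.
\end{equation*}
For $n=0$, since $\bar{q}(0,x)=e^{-\lambda x}$ and $\bar{u}_{2\beta}(\cdot,t)$ is the density of $Y_{\beta}(t)$, identity \eqref{ybx} immediately yields $E_{\beta,1}(-\lambda t^{\beta})$.

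The real work is to choose a closed form for $\bar{q}(n,x)$, $n\ge 1$, that lines up with the binomial shape stated in \eqref{pmf11}. The J\'anossy-type expression \eqref{p(n,t)131} is not well suited for this, so I would instead start from the pgf \eqref{pgfplp3} and write
\begin{equation*}
\bar{G}(u,t)=e^{-\lambda t}\sum_{j=0}^{\infty}\frac{1}{j!}\left(\frac{\lambda t}{(1-\rho)^{-r}-1}\right)^{j}\bigl((1-\rho u)^{-r}-1\bigr)^{j}.
\end{equation*}
Expanding $((1-\rho u)^{-r}-1)^{j}$ by the binomial theorem and each $(1-\rho u)^{-rm}$ by the negative binomial series, I read off, for $n\ge 1$,
\begin{equation*}
\bar{q}(n,t)=e^{-\lambda t}\rho^{n}\sum_{j=1}^{\infty}\frac{1}{j!}\left(\frac{\lambda t}{(1-\rho)^{-r}-1}\right)^{j}\sum_{m=1}^{j}(-1)^{j-m}\binom{j}{m}\binom{rm+n-1}{n}.
\end{equation*}
The crucial point is that $m\mapsto\binom{rm+n-1}{n}$ is a polynomial of degree $n$ in $m$, so by the standard finite-difference identity $\sum_{m=0}^{j}(-1)^{j-m}\binom{j}{m}P(m)=0$ whenever $\deg P<j$, and because the $m=0$ term equals $(-1)^{j}\binom{n-1}{n}=0$, the inner sum vanishes for every $j>n$. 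The outer series therefore truncates to $\sum_{j=1}^{n}$.

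Substituting this truncated expression for $\bar{q}(n,x)$ into the subordination integral and interchanging the finite double sum with the integral, each term reduces to
\begin{equation*}
\int_{0}^{\infty}x^{j}e^{-\lambda x}\bar{u}_{2\beta}(x,t)\,\mathrm{d}x=j!\,t^{\beta j}E^{j+1}_{\beta,j\beta+1}\bigl(-\lambda t^{\beta}\bigr),
\end{equation*}
exactly the identity obtained from Eq.~(2.13) of Beghin and Orsingher (2010) that already appeared in the proof of Theorem \ref{thm4.21}. The factorials cancel, and writing $(-1)^{j-m}=(-1)^{m}(-1)^{j}$ and absorbing the $(-1)^{j}$ into the bracketed factor reproduces the sign structure of \eqref{pmf11}. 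The main obstacle is the truncation argument that forces the upper limit $j=n$; once the polynomial-in-$m$ finite-difference identity is invoked, the rest is a mechanical adaptation of the template from Theorems \ref{thm4.21} and \ref{thm4.23}.
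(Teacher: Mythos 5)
Your proposal is correct and follows essentially the same route as the paper: both proofs rest on the subordination integral $\bar{q}_{\beta}(n,t)=\int_{0}^{\infty}\bar{q}(n,x)\bar{u}_{2\beta}(x,t)\,\mathrm{d}x$ coming from \eqref{keyyekk311} and \eqref{ub}, followed by the Beghin--Orsingher identity converting $\int_{0}^{\infty}x^{j}e^{-\lambda x}\bar{u}_{2\beta}(x,t)\,\mathrm{d}x$ into $j!\,t^{\beta j}E^{j+1}_{\beta,j\beta+1}(-\lambda t^{\beta})$, exactly as in Theorem \ref{thm4.21}. The only difference is that the paper takes the double-sum form of $\bar{q}(n,x)$ directly from Jacob and Jose (2018), Eq.~(3), whereas you rederive it from the pgf \eqref{pgfplp3}, with a correct finite-difference truncation argument; this is a sound, self-contained substitute for that citation.
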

\begin{proof}
From \eqref{ub} and \eqref{keyyekk311}, we have
\begin{equation*}
\bar{q}_{\beta}(n,t)=\int_{0}^{\infty}\bar{q}(n,x)\bar{u}_{2\beta}(x,t)\mathrm{d}x,
\end{equation*}
where $\bar{q}(n,x)$ is the pmf of GPAP (see Jacob and Jose (2018), Eq. (3)). From this point the proof follows similar lines to that of Theorem \ref{thm4.21}.
\end{proof}
	
\begin{remark}
If we use \eqref{p(n,t)131} in the proof of Theorem \ref{thm4.231} then we can obtain the following alternate form of the pmf of FGPAP:		
\begin{equation}\label{pmf111}
\bar{q}_{\beta}(n,t)=\begin{cases*}
E_{\beta, 1}\left(-\lambda t^{\beta}\right),\ n=0,\\
\displaystyle\sum_{\Omega_{n}}\prod_{j=1}^{n}\frac{\left(\rho^{j}\binom{r+j-1}{j}\right)^{x_{j}}}{x_{j}!}z_{n}!\left(\frac{\lambda t^{\beta}(1-\rho)^{r}}{1-(1-\rho)^{r}}\right)^{z_{n}}E_{\beta, z_{n}\beta +1}^{z_{n}+1}\left(-\lambda t^{\beta}\right),\ n\ge1.
\end{cases*}
\end{equation}
Again,	if we use \eqref{alterpnt31} in the proof of Theorem \ref{thm4.231} then we can obtain the following: 
\begin{equation}\label{pmf2p1}
\bar{q}_{\beta}(n,t)=\sum_{\Omega^{n}_{k}}\prod_{j=1}^{k}\rho^{x_{j}}\binom{r+x_{j}-1}{x_{j}}\left(\frac{\lambda t^{\beta}(1-\rho)^{r}}{1-(1-\rho)^{r}}\right)^{k} E_{\beta, k\beta+1}^{k+1}\left(-\lambda t^{\beta}\right).
\end{equation}	
\end{remark}
From (\ref{compoundr31}), we have
\begin{equation*}
\bar{q}_{\beta}(0,t)=\mathrm{Pr}\{N_{\beta}(t)=0\}=E_{\beta,1}(-\lambda t^{\beta}).
\end{equation*}
For $n\ge1$, we get
\begin{align}\label{rdee3231}
\bar{q}_{\beta}(n,t)&=\sum_{k=1}^{n}\mathrm{Pr}\{X_{1}+X_{2}+\dots+X_{k}=n\}\mathrm{Pr}\{N_{\beta}(t)=k\}\\
&=\sum_{k=1}^{n}\sum_{\Theta_{n}^{k}}k!\prod_{j=1}^{n}\frac{\left(\rho^{j}\binom{r+j-1}{j}\right)^{x_{j}}}{x_{j}!}\left(\frac{\lambda t^{\beta}(1-\rho)^{r}}{1-(1-\rho)^{r}}\right)^{k} E_{\beta,k\beta+1}^{k+1}(-\lambda t^{\beta}),\label{tkn31}
\end{align}
where $\Theta_{n}^{k}$ is given in (\ref{tknd}).
	
As $X_{i}$'s are iid, we have
\begin{align}\label{sweee3231}
\mathrm{Pr}\{X_{1}+X_{2}+\dots+X_{k}=n\}
&=\underset{m_j\in\mathbb{N}}{\underset{m_{1}+m_{2}+\dots+m_{k}=n}{\sum}}\prod_{j=1}^{k}\mathrm{Pr}\{X_{j}=m_j\}\nonumber\\
&=\underset{m_j\in\mathbb{N}}{\underset{m_{1}+m_{2}+\dots+m_{k}=n}{\sum}}\left(\frac{(1-\rho)^{r}}{1-(1-\rho)^{r}}\right)^{k}\prod_{j=1}^{k}\rho^{m_{j}}\binom{r+m_{j}-1}{m_{j}},	
\end{align}
where we have used (\ref{gpap1}). Substituting (\ref{sweee3231}) in (\ref{rdee3231}), we get an equivalent expression for the pmf of FGPAP in the following form:
\begin{equation}\label{pmf4p1}
\bar{q}_{\beta}(n,t)=\sum_{k=1}^{n}\underset{m_j\in\mathbb{N}}{\underset{m_{1}+m_{2}+\dots+m_{k}=n}{\sum}}\left(\frac{\lambda t^{\beta}(1-\rho)^{r}}{1-(1-\rho)^{r}}\right)^{k}\prod_{j=1}^{k}\rho^{m_{j}}\binom{r+m_{j}-1}{m_{j}}E_{\beta,k\beta+1}^{k+1}(-\lambda t^{\beta}).	
\end{equation}
	
The pmf \eqref{tkn31} can be written in the following equivalent form by using Lemma 2.4 of Kataria and Vellaisamy (2017b): 
\begin{equation}\label{pm42p1}
\bar{q}_{\beta}(n,t)=\sum_{k=1}^{n}\sum_{\Lambda_{n}^{k}}k!\prod_{j=1}^{n-k+1}\frac{\left(\rho^{j}\binom{r+j-1}{j}\right)^{x_{j}}}{x_{j}!}\left(\frac{\lambda t^{\beta}(1-\rho)^{r}}{1-(1-\rho)^{r}}\right)^{k}E_{\beta,k\beta+1}^{k+1}(-\lambda t^{\beta}),
\end{equation}
where $\Lambda_{n}^{k}$ is given in \eqref{Lkn}.
	
Thus, we have obtained the six alternate forms of the pmf of FGPAP given in (\ref{pmf11})- \eqref{pmf2p1}, \eqref{tkn31}, \eqref{pmf4p1} and \eqref{pm42p1}. On substituting $r=1$ in these pmfs, we get the equivalent versions of the pmf of fractional P\'olya-Aeppli process.
	
\begin{remark}
The distribution of the first waiting time $\bar{\mathcal{W}}_{1}$ of FGPAP is given by
\begin{equation*}
\mathrm{Pr}\{\bar{\mathcal{W}}_{1}>t\}=\mathrm{Pr}\{\bar{\mathcal{M}}_{\beta}(t)=0\}=E_{\beta, 1}\left(-\lambda t^{\beta}\right).
\end{equation*}
By using the same arguments as used in Remark \ref{remark3.9}, we conclude that the FGPAP is not a renewal process. 
\end{remark}
By using \eqref{meanvar} and Theorem 2.1 of Leonenko {\it et al.} (2014),  we obtain the mean, variance and  covariance of FGPAP as follows: 
\begin{align*}
\mathbb{E}\left(\bar{\mathcal{M}}_{\beta}(t)\right)&=\bar{r}_{1}\mathbb{E}\left(Y_{\beta}(t)\right),\nonumber
\\
\operatorname{Var}\left(\bar{\mathcal{M}}_{\beta}(t)\right)&=\bar{r}_{2}\mathbb{E}\left(Y_{\beta}(t)\right)+\bar{r}_{1}^{2}\operatorname{Var}\left(Y_{\beta}(t)\right),\\	
\operatorname{Cov}\left(\bar{\mathcal{M}}_{\beta}(s),\bar{\mathcal{M}}_{\beta}(t)\right)&=\bar{r}_{2}\mathbb{E}\left(Y_{\beta}(\min\{s,t\})\right)+\bar{r}_{1}^{2}\operatorname{Cov}\left(Y_{\beta}(s),Y_{\beta}(t)\right).
\end{align*}
The FGPAP exhibits overdispersion as $\operatorname{Var}\left(\bar{\mathcal{M}}_{\beta}(t)\right)-\mathbb{E}\left(\bar{\mathcal{M}}_{\beta}(t)\right)>0$ for all $t>0$.
	
The proof of next result follows similar lines to that of Theorem \ref{thm4.5}.
\begin{theorem}
The FGPAP exhibits the LRD property.
\end{theorem}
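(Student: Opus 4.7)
The plan is to adapt the argument of Theorem \ref{thm4.5} (the LRD result for FBTP) almost verbatim, since the expressions for mean, variance and covariance of FGPAP displayed immediately above have the same structural form, with $\bar{r}_1$ and $\bar{r}_2$ playing the roles that $\alpha\theta e^\theta$ and $\alpha\theta(\theta+1)e^\theta$ played in the FBTP case. So the whole proof reduces to one asymptotic computation.

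First I would write down the correlation explicitly, using the formulas just given for $\mathbb{E}\bar{\mathcal{M}}_\beta(t)$, $\operatorname{Var}\bar{\mathcal{M}}_\beta(t)$ and $\operatorname{Cov}(\bar{\mathcal{M}}_\beta(s),\bar{\mathcal{M}}_\beta(t))$, namely
\begin{equation*}
\operatorname{Corr}(\bar{\mathcal{M}}_\beta(s),\bar{\mathcal{M}}_\beta(t))
=\frac{\bar{r}_2\,\mathbb{E}(Y_\beta(s))+\bar{r}_1^{\,2}\,\operatorname{Cov}(Y_\beta(s),Y_\beta(t))}
{\sqrt{\operatorname{Var}\bar{\mathcal{M}}_\beta(s)}\sqrt{\bar{r}_2\,\mathbb{E}(Y_\beta(t))+\bar{r}_1^{\,2}\,\operatorname{Var}(Y_\beta(t))}},
\end{equation*}
for fixed $s$ and $t\ge s$ large, so that $\min\{s,t\}=s$.

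Next I would substitute the known large-$t$ asymptotics \eqref{meani}--\eqref{asi1}. In the numerator, $\mathbb{E}(Y_\beta(s))$ is a constant in $s$ and, by \eqref{asi1}, $\operatorname{Cov}(Y_\beta(s),Y_\beta(t))$ tends to the finite constant $\beta s^{2\beta}B(\beta,\beta+1)/\Gamma^2(\beta+1)$ plus a term of order $t^{\beta-1}$; hence the numerator converges to a positive constant depending only on $s$. In the denominator, the factor involving $s$ is a positive constant, while $\bar{r}_2\,\mathbb{E}(Y_\beta(t))=O(t^\beta)$ is dominated by $\bar{r}_1^{\,2}\operatorname{Var}(Y_\beta(t))\sim c\,t^{2\beta}$, so the second square root grows like $t^\beta$. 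Combining these gives
\begin{equation*}
\operatorname{Corr}(\bar{\mathcal{M}}_\beta(s),\bar{\mathcal{M}}_\beta(t))\sim \bar{c}_0(s)\,t^{-\beta},\qquad t\to\infty,
\end{equation*}
with an explicit positive constant
\begin{equation*}
\bar{c}_0(s)=\frac{\bar{r}_2\,\Gamma^2(\beta+1)\,\mathbb{E}(Y_\beta(s))+\bar{r}_1^{\,2}\,\beta s^{2\beta}B(\beta,\beta+1)}{\Gamma^2(\beta+1)\,\sqrt{\operatorname{Var}\bar{\mathcal{M}}_\beta(s)}\,\sqrt{\,2/\Gamma(2\beta+1)-1/\Gamma^2(\beta+1)\,}\;\bar{r}_1}.
\end{equation*}
Since $\beta\in(0,1)$, the decay exponent $\theta=\beta$ lies in $(0,1)$, and the definition of LRD in the Preliminaries is satisfied.

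The only step that needs care is verifying that the term $\bar{r}_1^{\,2}\operatorname{Var}(Y_\beta(t))$ indeed dominates $\bar{r}_2\,\mathbb{E}(Y_\beta(t))$ in the denominator, i.e.\ that the coefficient $2/\Gamma(2\beta+1)-1/\Gamma^2(\beta+1)$ is strictly positive; this is well known and already used implicitly in \eqref{xswe331} and in the analogous FBTP argument, so no obstacle arises. Thus the proof is a straightforward transcription of the FBTP case with $(\alpha\theta e^\theta,\alpha\theta(\theta+1)e^\theta)$ replaced by $(\bar{r}_1,\bar{r}_2)$.
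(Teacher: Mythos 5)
Your proposal is correct and is exactly what the paper intends: the paper's proof consists of the single remark that the argument of Theorem \ref{thm4.5} carries over, with $\bar{r}_1$ and $\bar{r}_2$ replacing $\alpha\theta e^{\theta}$ and $\alpha\theta(\theta+1)e^{\theta}$, and your computation (including the resulting constant $\bar{c}_0(s)$ and the decay exponent $\beta\in(0,1)$) matches that adaptation.
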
	
\begin{remark}
As in Remark \ref{srd}, for a fixed $h>0$, the increment process $\bar{\mathcal{Z}}_{\beta}^{h}(t)\coloneqq \bar{\mathcal{M}}_{\beta}(t+h)-\bar{\mathcal{M}}_{\beta}(t)$, $t\ge0$ of FGPAP exhibits the SRD property.	
\end{remark}
\begin{proposition}\label{prop1}
The one-dimensional distributions of FGPAP are not infinitely divisible.
\end{proposition}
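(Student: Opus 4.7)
The plan is to argue by contradiction, replicating the template used in Proposition 3 of Kataria and Khandakar (2022) and already applied for the FBTP and for the FPLP in Proposition \ref{prop}. I would suppose that the distribution of $\bar{\mathcal{M}}_{\beta}(t)$ is infinitely divisible for every $t>0$, and then transfer infinite divisibility to a limiting random variable whose infinite divisibility is known to fail.

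First I would combine the subordination identity \eqref{ybt31} with the self-similarity property of the inverse stable subordinator, $Y_{\beta}(t)\stackrel{d}{=}t^{\beta}Y_{\beta}(1)$, to write
\begin{equation*}
\bar{\mathcal{M}}_{\beta}(t)\stackrel{d}{=}\bar{\mathcal{M}}\bigl(t^{\beta}Y_{\beta}(1)\bigr).
\end{equation*}
Since $Y_{\beta}(1)>0$ almost surely, I would condition on $Y_{\beta}(1)$ and apply the limit \eqref{limit} to the inner L\'evy process $\bar{\mathcal{M}}$ along the scale $t^{\beta}Y_{\beta}(1)\to\infty$, which should yield
\begin{equation*}
\frac{\bar{\mathcal{M}}_{\beta}(t)}{t^{\beta}}\stackrel{d}{\longrightarrow}\bar{r}_{1}\,Y_{\beta}(1)\quad\text{as }t\to\infty,
\end{equation*}
with $\bar{r}_{1}$ the constant appearing in \eqref{meanvar}. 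Under the contradiction hypothesis, each $\bar{\mathcal{M}}_{\beta}(t)/t^{\beta}$ is infinitely divisible, since positive rescaling preserves infinite divisibility; and as the class of infinitely divisible laws on $\mathbb{R}$ is closed under weak convergence, the limit $\bar{r}_{1}Y_{\beta}(1)$ would also be infinitely divisible, hence so would $Y_{\beta}(1)$ itself.

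The hard part is closing the argument by invoking the non-infinite-divisibility of $Y_{\beta}(1)$. This is the nontrivial input borrowed from Proposition 3 of Kataria and Khandakar (2022); for $0<\beta<1$ the Mittag-Leffler type law of $Y_{\beta}(1)$ does not admit a L\'evy-Khintchine representation on $(0,\infty)$, which supplies the required contradiction and completes the proof. Beyond verifying this final step, the only routine points are the appeal to weak-limit closure and the justification that the convergence in \eqref{limit}, which is stated for deterministic $t\to\infty$, lifts to the random clock $t^{\beta}Y_{\beta}(1)$ via conditioning and dominated convergence.
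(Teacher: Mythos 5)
Your proposal is correct and follows essentially the same route as the paper, which proves the result by combining the time-change identity \eqref{ybt31}, the self-similarity $Y_{\beta}(t)\stackrel{d}{=}t^{\beta}Y_{\beta}(1)$, the limit \eqref{limit}, and the argument of Proposition 3 of Kataria and Khandakar (2022), i.e., closure of infinitely divisible laws under weak limits together with the known non-infinite divisibility of $Y_{\beta}(1)$. No substantive differences to report.
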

The proof of Proposition \ref{prop1} follows on using \eqref{limit}, the self-similarity property of $\{Y_{\beta}(t)\}_{t\ge0}$ in \eqref{ybt31} and the arguments used in Proposition 3 of Kataria and Khandakar (2022).

\end{document}